\newcolumntype{Y}{>{\centering\arraybackslash}X}
\newcommand\restr[2]{{
	\left.\kern-\nulldelimiterspace 
	#1 
	\vphantom{\big|} 
	\right|_{#2} 
}}
\newcommand\srestr[2]{{
	\left.\kern-\nulldelimiterspace 
	#1 
	\right|_{#2} 
}}
\DeclareFontFamily{U}{MnSymbolC}{}
\DeclareSymbolFont{MnSyC}{U}{MnSymbolC}{m}{n}
\DeclareFontShape{U}{MnSymbolC}{m}{n}{
    <-6>  MnSymbolC5
   <6-7>  MnSymbolC6
   <7-8>  MnSymbolC7
   <8-9>  MnSymbolC8
   <9-10> MnSymbolC9
  <10-12> MnSymbolC10
  <12->   MnSymbolC12}{}
\DeclareMathSymbol{\intprod}{\mathbin}{MnSyC}{'270}
\newcommand{\calH}{\mathcal{H}}
\newcommand{\calG}{\mathcal{G}}
\newcommand{\calZ}{\mathcal{Z}}
\newcommand{\bR}{\mathbf{R}}
\newcommand{\bS}{\mathbf{S}}
\newcommand{\bN}{\mathbf{N}}
\newcommand{\bZ}{\mathbf{Z}}
\newcommand{\SO}{SO}
\newcommand{\so}{\mathfrak{so}}
\newcommand*\intdiff{\mathop{}\!\mathrm{d}}
\newcommand{\abs}[1]{\lvert#1\rvert}
\DeclareMathOperator{\VAR}{\mathbf{V}}
\newcommand{\Gr}{Gr}
\newcommand{\eps}{\epsilon}
\newcommand{\pd}{\partial}
\newcommand{\bdary}{\partial}
\newcommand{\clos}[1]{\overline{#1}}
\DeclareMathOperator{\indx}{index}
\DeclareMathOperator{\nullity}{nullity}
\DeclareMathOperator{\stab}{Stab}
\DeclareMathOperator{\orb}{Orb}
\DeclareMathOperator\Ric{Ric}
\DeclareMathOperator{\dist}{dist}
\theoremstyle{plain}
\newtheorem{thm}{Theorem}
\newtheorem*{thm*}{Theorem}
\newtheorem{lem}[thm]{Lemma}
\newtheorem*{lem*}{Lemma}
\newtheorem{prop}[thm]{Proposition}
\newtheorem{cor}[thm]{Corollary}
\newtheorem*{cor*}{Corollary}
\theoremstyle{definition}
\newtheorem*{ackn}{Acknowledgements}
\theoremstyle{remark}
\newtheorem{rem}[thm]{Remark}
\newtheorem*{rem*}{Remark}
\newtheorem*{claim*}{Claim}
\newtheorem*{notation*}{Notation}
\newtheorem*{quest*}{Question}
\theoremstyle{plain}
\title[Low index solutions of the Allen--Cahn equation on $\bS^3$]
{
Rigidity of low index solutions on $\bS^3$ \\
via a Frankel theorem \\
for the Allen--Cahn equation
}
\author{Fritz Hiesmayr}
\address{University College London, 25 Gordon Street, London WC1H 0AY}
\email{f.hiesmayr@ucl.ac.uk}
\begin{document}

\begin{abstract}
We prove a rigidity theorem in the style of Urbano for the Allen--Cahn equation
on the three-sphere: the critical points with Morse index five
are symmetric functions that vanish on a Clifford torus.
Moreover they realise the fifth width of the min-max spectrum
for the Allen--Cahn functional. 
We approach this problem by analysing the nullity and symmetries of these critical points.
We then prove a suitable Frankel-type theorem for their nodal sets,
generally valid in manifolds with positive Ricci curvature.
This plays a key role in establishing the conclusion, and further
allows us to derive ancillary rigidity results in spheres with larger dimension.
\end{abstract}

\maketitle

%

\section{Introduction}

F.\ Urbano gave a description of the minimal surfaces in the round $\bS^3$ with
low Morse index~\cite{UrbanoLowIndex}.
The only embedded surfaces with index at most five are
the equatorial spheres $S \subset \bS^3$, with index one, and the Clifford
tori $T \subset \bS^3$, with index five.
Our aim here is to prove an analogous result in the setting of the Allen--Cahn
equation.

Consequences of Urbano's theorem have found applications in numerous contexts,
most notably the proof of the Willmore conjecture by
Marques--Neves~\cite{MarquesNevesWillmore}.
Their proof relied on the development of the so-called Almgren--Pitts min-max
methods. 
In recent years an alternative min-max theory has grown, whose underlying
idea is to first find critical points of a certain semilinear elliptic functional.
(For the sake of this exposition we work in $\bS^3$, although much of the discussion
could remain unchanged in higher-dimensional spheres or general closed manifolds.)
This is the Allen--Cahn functional $E_\eps(u)
= \int_{\bS^3} \frac{\eps}{2} \abs{\nabla u}^2 + \eps^{-1} W(u)$,
which depends on a small parameter $\eps > 0$.
(For the sake of this exposition we may use the potential function
$W(t) = \frac{1}{4} (1 - t^2)^2$.)
This models the phase transition in a two-phase liquid. Its critical points
solve the elliptic Allen--Cahn equation
\begin{equation}
	\eps \Delta u - \eps^{-1} W'(u) = 0;
\end{equation}
we write $\calZ_\eps$ for the set of (classical) solutions of this equation.
As $\eps \to 0$ their values tend to congregate near $\pm 1$, separated
by a thin transition layer around a minimal surface.

A number of authors have made contributions to this alternative min-max theory;
we give a condensed summary of only those results most pertinent here.
Hutchinson--Tonegawa~\cite{HutchinsonTonegawa00} proved that given a sequence
of positive $\eps_j \to 0$ and critical points $u_j \in \calZ_{\eps_j}$, one
can form a sequence of varifolds $V_j$ which converge weakly to a stationary
integral varifold $V$.
When the Morse index of the $u_j$ is bounded along the sequence,
then the combined work of Tonegawa--Wickramasekera~\cite{TonegawaWickramasekera10}
and Guaraco~\cite{Guaraco15} establishes that
\begin{equation}
	V_j \to Q \abs{\Sigma} \text{ as $j \to \infty$},
\end{equation}
where $\Sigma \subset \bS^3$ is a smooth embedded minimal surface
and $Q \in \bZ_{>0}$ is its so-called multiplicity.
(Both of these rely fundamentally on the deep results
of Wickramasekera~\cite{Wickramasekera14} on the regularity of
stable codimension one stationary varifolds.)
Following this, Chodosh--Mantoulidis~\cite{ChodoshMantoulidis2018}
proved that the convergence is actually with multiplicity one, that is $Q = 1$.
(This is the main element not available in higher dimensions,
although it does hold in three-manifolds with positive Ricci curvature.)

It was Guaraco~\cite{Guaraco15} and Gaspar--Guaraco~\cite{GasparGuaraco2016} who
respectively implemented one-and multi-parameter min-max methods in the
Allen--Cahn setting. When working with $p \in \bZ_{>0}$ parameters, these
produce a critical point which realises the $p$-th \emph{width} $c_\eps(p)$
of the min-max spectrum of the Allen--Cahn functional.
With fixed $\eps > 0$ and varying $p$, these form a monotone increasing sequence,
\begin{equation}
0 <  c_\eps(1) \leq
	\cdots \leq c_\eps(p) \leq c_\eps(p+1) \leq \cdots.
\end{equation}

One is naturally led to draw comparisons with the sequence of widths
attained by Almgren--Pitts methods: $(\omega(p) \mid p \in \bZ_{>0})$.
Nurser~\cite{NurserMinMaxWidths} computed the first terms in this sequence
in the round $\bS^3$.
The first four have values $\omega(1) = \cdots = \omega(4) = 4 \pi$, and
are realised by equatorial spheres, and the next three widths are
$\omega(5) = \omega(6) = \omega(7) = 2 \pi^2$, and are realised by Clifford tori.
Recently Caju--Gaspar--Guaraco--Matthiesen~\cite{CajuGasparGuaracoMatthiesen}
proved an analogue for the first four min-max widths for the Allen--Cahn functional,
namely they are equal and are realised by symmetric functions
vanishing on equatorial spheres.
Moreover, they point out that for topological reasons the next width $c_\eps(5)$
must be distinct from the first four.
(The lower bounds for the phase transition spectrum of~\cite{GasparGuaraco2016} 
give an alternative justification for this.)

We prove that in fact $c_{\eps}(5)$ is realised by a function vanishing
on a Clifford torus $T$, with the same symmetries as $T$.
This result, stated in Corollary~\ref{cor_min_max_values} below, is a direct
consequence of the following rigidity theorem.

\begin{thm}
	\label{thm_main_result_clifford}
Given any $C > 1$, there is $\eps_0  > 0$ so that
for all $0 < \eps < \eps_0$ the following holds.
If a solution $u \in \calZ_\eps$ on $\bS^3$
has $\indx u \leq 5$
and $E_\eps(u) \leq C$, then its nodal set is either an equatorial sphere
or a Clifford torus, and $u$ is a symmetric solution around it.
\end{thm}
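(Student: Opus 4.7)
The plan is to argue by contradiction, pairing the compactness theory recalled in the introduction with a nullity-and-symmetry analysis of $u_j$ and with the Frankel-type theorem for nodal sets discussed in the abstract.

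Suppose the conclusion fails: then there exist $\eps_j \to 0$ and $u_j \in \calZ_{\eps_j}$ with $\indx u_j \leq 5$ and $E_{\eps_j}(u_j) \leq C$ violating either the shape of the nodal set or the symmetry of $u_j$ around it. Passing to a subsequence, the varifolds $V_j$ associated to $u_j$ converge to $Q\abs{\Sigma}$ for a smooth embedded minimal $\Sigma \subset \bS^3$ by Hutchinson--Tonegawa and Tonegawa--Wickramasekera--Guaraco, and $Q = 1$ by Chodosh--Mantoulidis. Testing the Jacobi operator of $\Sigma$ using the one-dimensional Allen--Cahn transition profile multiplied by Jacobi eigenfunctions on $\Sigma$ transfers the index bound to the limit, so $\indx \Sigma \leq \liminf \indx u_j \leq 5$. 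Urbano's theorem then forces $\Sigma$ to be either an equatorial sphere $S$ (index $1$) or a Clifford torus $T$ (index $5$).

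I would next exploit the symmetries of $\bS^3$ to extract structure from $u_j$ itself. Every Killing field $X \in \so(4)$ produces a Jacobi field $\langle X, \nabla u_j\rangle$ for the linearised Allen--Cahn operator at $u_j$, so each Killing field not preserving $u_j$ contributes a dimension of nullity. Counting this nullity against the index bound on $u_j$ and against the index $1$ or $5$ already saturated by $\Sigma$, the arithmetic forces the stabiliser of $u_j$ in $O(4)$ to contain the stabiliser of $\Sigma$; in other words $u_j$ is already equivariant under the symmetries of $\Sigma$. The Frankel-type theorem of the paper is then used to pin down the zero set: multiplicity-one varifold convergence only places $\{u_j = 0\}$ near $\Sigma$ in a weak sense, and a priori nothing rules out extra nodal components elsewhere in $\bS^3 \setminus \Sigma$, but the Frankel principle for nodal sets in positive Ricci curvature forces hypothetical disjoint components to meet, a contradiction. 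Combined with the equivariance from the previous step this forces $\{u_j = 0\}$ to coincide, after an ambient isometry, with a single copy of $S$ or $T$, and $u_j$ to be a symmetric solution about it.

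The main obstacle is the middle, rigidity step: turning weak varifold convergence into exact equivariance through nullity bookkeeping. One delicate point is that the Killing-induced Jacobi fields may be linearly dependent or may degenerate as $\eps_j \to 0$, so the dimension count must be carried out carefully --- especially around $T$, where four parameters of symmetry already exist and the gap between the available index and the required nullity is tight.
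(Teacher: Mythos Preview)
Your overall architecture matches the paper's: contradiction, varifold compactness with multiplicity one, Urbano, symmetry via Killing nullity, then Frankel to locate the nodal set. Two of the steps, however, are genuinely incomplete as written.

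\textbf{The nullity count.} You assert that ``counting this nullity against the index bound on $u_j$'' forces $\stab u_j$ to contain $\stab \Sigma$. But the hypothesis $\indx u_j \leq 5$ says nothing about $\nullity u_j$; a priori the kernel of $L_{\eps_j,u_j}$ could be large. The missing ingredient is the second Chodosh--Mantoulidis estimate, $\limsup_j(\indx u_j + \nullity u_j) \leq \indx \Sigma + \nullity \Sigma$, which in the Clifford case gives $\nullity u_j \leq 4$. The paper then runs the inequality in the other direction: using that $\stab u_j$ stabilises $V(\eps_j,u_j)$ and that the latter converges to $\abs{T}$, a Montgomery--Zippin argument shows $\stab u_j$ is conjugate to a \emph{subgroup} of $G_T$, hence $\nu(u_j) \geq \nu(T) = 4$. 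Sandwiching gives $\nu(u_j) = \nullity u_j = 4$ and $\stab u_j$ conjugate to $G_T$ itself. Your worry about Killing Jacobi fields being ``linearly dependent or degenerate'' is thus not the obstacle; the obstacle is that you have no upper bound on the nullity without invoking the augmented-index estimate.

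\textbf{The Frankel step.} You describe Frankel as ruling out ``extra nodal components elsewhere in $\bS^3 \setminus \Sigma$'', i.e.\ as a statement about a single solution. The paper's Frankel theorem compares the nodal sets of \emph{two distinct} solutions. Concretely: after rotating so that $v_\eps := u_\eps \circ P_\eps$ is $G_T$-invariant, the nodal set $Z(v_\eps)$ is a union of $G_T$-orbits near $T$; the Chodosh--Mantoulidis sheet description makes it connected, so it is a single orbit $\{\dist(\cdot,T) = \delta\}$. Separately one constructs (via Brezis--Oswald) the canonical symmetric solution $u_{T,\eps}$ with $Z(u_{T,\eps}) = T$ exactly. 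Both nodal sets are connected, so Frankel forces them to meet, which gives $\delta = 0$ and $Z(v_\eps) = T$; the uniqueness half of the Brezis--Oswald lemma then yields $v_\eps = \pm u_{T,\eps}$. Your sketch omits both the construction of $u_{T,\eps}$ and this comparison, and without a second solution to compare against there is no Frankel statement to invoke.
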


Its proof is given in the last section. We proceed by studying the nullity
and comparing it to the Killing nullity of $u$; this allows conclusions
about the symmetry of $u$.

\begin{cor}
\label{cor_min_max_values}
There is $\eps_0 > 0$ so that if $0 < \eps < \eps_0$
then any $u \in \calZ_\eps$ on $\bS^3$ with
$E_\eps(u) = c_\eps(5)$ is a symmetric solution vanishing on a Clifford torus.
\end{cor}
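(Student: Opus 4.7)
The plan is to combine the multi-parameter Allen--Cahn min-max with Theorem~\ref{thm_main_result_clifford} and the known structure of the first four widths. The Gaspar--Guaraco multi-parameter min-max produces a critical point $u^\star \in \calZ_\eps$ realising $c_\eps(5)$ with Morse index $\indx u^\star \leq 5$. Since the Allen--Cahn widths converge to the Almgren--Pitts widths as $\eps \to 0$, and in particular $c_\eps(5) \to \omega(5) = 2\pi^2$, there is a uniform $C > 1$ with $E_\eps(u^\star) = c_\eps(5) \leq C$ for all sufficiently small $\eps$. The hypotheses of Theorem~\ref{thm_main_result_clifford} are then satisfied, and we conclude that $u^\star$ is a symmetric solution whose nodal set is either an equatorial sphere or a Clifford torus.

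To rule out the equatorial case, I would invoke the result of Caju--Gaspar--Guaraco--Matthiesen cited in the introduction: any symmetric solution vanishing on an equatorial sphere has energy equal to $c_\eps(1) = \cdots = c_\eps(4)$. Combined with the strict inequality $c_\eps(4) < c_\eps(5)$ recorded in the introduction (following either from topological obstructions or from the spectral lower bounds of~\cite{GasparGuaraco2016}), this excludes the equatorial alternative and forces the nodal set of $u^\star$ to be a Clifford torus with the corresponding symmetries.

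The delicate point, and what I expect to be the main obstacle, is extending the conclusion from a distinguished min-max critical point to \emph{every} $u \in \calZ_\eps$ with $E_\eps(u) = c_\eps(5)$; this amounts to establishing the index bound $\indx u \leq 5$ for all such $u$. The natural route is the standard deformation argument at a min-max critical level: if $\indx u \geq 6$, then the six-dimensional negative eigenspace for $u$ provides enough transverse directions to push an admissible $5$-sweepout strictly below the level $c_\eps(5)$ while still passing through $u$, contradicting the definition of the fifth width. Once this index bound is in hand for arbitrary realisers, Theorem~\ref{thm_main_result_clifford} and the equatorial-sphere exclusion above yield the full statement of the corollary.
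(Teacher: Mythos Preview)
Your treatment of the distinguished min-max critical point $u^\star$ is correct and matches the paper: Gaspar--Guaraco gives $\indx u^\star \leq 5$, the energy is uniformly bounded, Theorem~\ref{thm_main_result_clifford} applies, and $c_\eps(4) < c_\eps(5)$ rules out the equatorial case. You are also right that the delicate step is passing from $u^\star$ to an \emph{arbitrary} $u \in \calZ_\eps$ with $E_\eps(u) = c_\eps(5)$.

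However, the deformation argument you sketch for this step does not work. Min-max theory guarantees only that \emph{some} critical point at level $c_\eps(5)$ has index at most five; it does not bound the index of every critical point at that level. Concretely, suppose $u$ has $\indx u \geq 6$. You already know there is another critical point $u^\star$ at the same level with $\indx u^\star \leq 5$. Any admissible $5$-sweepout, once deformed away from $u$ using its six negative directions, will still be obstructed by $u^\star$ (or by the whole critical set at level $c_\eps(5)$), so you cannot push the max strictly below $c_\eps(5)$. The standard deformation lemma lets you cross a level only when \emph{all} critical points there have index exceeding the number of parameters; the existence of $u^\star$ blocks this.

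The paper bypasses this entirely. Once $u^\star$ is identified as a symmetric Clifford solution, one knows $c_\eps(5) \to 2\pi^2$. For an arbitrary $u_j$ with $E_{\eps_j}(u_j) = c_{\eps_j}(5)$, the associated varifolds have limit mass $2\pi^2$, and by the area rigidity coming out of Marques--Neves~\cite{MarquesNevesWillmore} the Clifford tori are the only minimal surfaces in $\bS^3$ whose area equals $2\pi^2$ or any fraction thereof; this forces $V(\eps_j,u_j) \to \abs{T}$ with multiplicity one. Now the Chodosh--Mantoulidis upper bound $\indx u_j + \nullity u_j \leq \indx T + \nullity T = 9$, together with $\nullity u_j \geq \nu(u_j) \geq \nu(T) = 4$, yields $\indx u_j \leq 5$ \emph{a posteriori}. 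At that point Theorem~\ref{thm_main_result_clifford} applies to $u_j$ itself. So the index bound for arbitrary realisers is obtained not from a sweepout deformation but from the geometric rigidity of the limit.
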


These are not the first rigidity results for the Allen--Cahn equation.
Besides the aforementioned~\cite{CajuGasparGuaracoMatthiesen} concerning the
ground states,
there is also recent work of
Guaraco--Marques--Neves~\cite{GuaracoMarquesNevesMultiplicityOne}
near non-degenerate minimal surfaces.
In~\cite{GuaracoMarquesNevesMultiplicityOne} the authors adapt the
curvature estimates of~\cite{ChodoshMantoulidis2018} to produce a non-trivial
Jacobi field on the limit minimal surface.
We sidestep the technical difficulties tied to such an approach,
and instead start by proving
a result inspired by \emph{Frankel's theorem}~\cite{FrankelPositiveCurvature,%
PetersenWilhelmFrankelsTheorem}.
%

Let $(M,g)$ be a closed manifold with positive Ricci curvature. Frankel's
theorem states that any two minimal hypersurfaces in $M$ must intersect.
This is not quite true for the nodal sets $Z(u_\eps^i) = \{ u_\eps^i = 0 \}$
of two solutions $u_\eps^i \in \calZ_\eps$ on $M$;
see for example~\cite[Expl.\ 1]{GuaracoMarquesNevesMultiplicityOne}. 
However, we prove that under mild topological hypotheses on the nodal sets
a Frankel-type result actually does hold.
The following theorem combines the statements of Corollary~\ref{cor_frankel_one_connected}
and Proposition~\ref{prop_frankel_both_connected}.
\begin{thm}
Let $(M,g)$ be a closed manifold of dimension $n+1\geq2$ with $\Ric > 0$.
Let $u_\eps^1,u_\eps^2 \neq \pm 1$ be two solutions of%
~\eqref{eq_allen_cahn} on $M$.
If 
\begin{enumerate}[label = (\Alph*),font = \upshape]
	\item either $Z(u_\eps^1)$ is separating and $Z(u_\eps^2)$ is connected,
	\item or $Z(u_\eps^1),Z(u_\eps^2)$ are both connected,
\end{enumerate}
then $Z(u_\eps^1) \cap Z(u_\eps^2) \neq \emptyset.$
\end{thm}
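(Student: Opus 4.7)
The plan is to mimic Frankel's classical argument via minimising geodesics, with the nodal sets $Z(u_\eps^j)$ playing the role of minimal hypersurfaces, using the topological hypotheses to pin down a favourable sign configuration. Assume for contradiction that $Z(u_\eps^1) \cap Z(u_\eps^2) = \emptyset$; by compactness $d := \dist(Z(u_\eps^1), Z(u_\eps^2)) > 0$. In case~(A) the separating hypothesis writes $M \setminus Z(u_\eps^1) = \{u_\eps^1 > 0\} \sqcup \{u_\eps^1 < 0\}$, and the connected $Z(u_\eps^2)$ sits entirely in one component, so after relabeling $u_\eps^1 > 0$ on $Z(u_\eps^2)$. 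In case~(B) the same reasoning is available in both indices: a connected $Z(u_\eps^{3-j})$ disjoint from $Z(u_\eps^j)$ lies in a single connected component of $M \setminus Z(u_\eps^j)$, on which $u_\eps^j$ has a definite sign.

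Next, pick regular points $x_j \in Z(u_\eps^j)$ realising the distance $d$, and take a unit-speed minimising geodesic $\gamma \colon [0,d] \to M$ from $x_1$ to $x_2$. First variation gives $\gamma \perp Z(u_\eps^j)$ at each $x_j$, and the sign reduction fixes $\dot\gamma(0) = \nabla u_\eps^1(x_1)/|\nabla u_\eps^1(x_1)|$, pointing in the direction of increasing $u_\eps^1$ into the component containing $Z(u_\eps^2)$. Choosing a parallel orthonormal frame $E_1, \dots, E_n$ along $\gamma$ with $E_i(0) \in T_{x_1} Z(u_\eps^1)$, parallel transport automatically places $E_i(d) \in T_{x_2} Z(u_\eps^2)$; each $E_i$ yields an admissible variation of $\gamma$ with endpoints on the nodal sets, so $L_i''(0) \geq 0$, and summing gives
\begin{equation}
0 \leq \sum_{i=1}^{n} L_i''(0) = -\int_0^d \Ric(\dot\gamma,\dot\gamma)\,dt + H_2 - H_1,
\end{equation}
where $H_j$ is the mean curvature of $Z(u_\eps^j)$ at $x_j$ with respect to $\dot\gamma$. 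Positivity of $\Ric$ forces $H_2 - H_1 > \int_0^d \Ric(\dot\gamma,\dot\gamma)\,dt > 0$.

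The task is to rule this out using the Allen--Cahn structure. Since $W'(0) = 0$, the equation gives $\Delta u_\eps^j \equiv 0$ along $\{u_\eps^j = 0\}$, so the level-set formula $H = \div(\nabla u/|\nabla u|)$ reduces at a nodal point to $H_j = -\partial_{\dot\gamma}|\nabla u_\eps^j|/|\nabla u_\eps^j|$. The natural tool for controlling these derivatives is the Modica--Sternberg--Zumbrun inequality $P_\eps := \tfrac{\eps}{2}|\nabla u|^2 - \eps^{-1} W(u) \leq 0$, valid on closed manifolds with $\Ric \geq 0$ and strict under $\Ric > 0$. Because $W$ attains its global maximum at $0$, this pins $|\nabla u_\eps^j|$ at its largest admissible value on $\{u_\eps^j = 0\}$ and strictly lowers the ceiling as $u_\eps^j$ moves away from $0$. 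I plan to differentiate this inequality across $\{u_\eps^j = 0\}$ and, combining with the sign configuration from the first paragraph, obtain an upper bound on $H_2 - H_1$ strictly below $\int_0^d \Ric(\dot\gamma,\dot\gamma)\,dt$, contradicting the second variation estimate.

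The hard step is this mean-curvature comparison. Classical Frankel relies on identically vanishing $H_j$, which is not available for Allen--Cahn nodal sets: the sign and magnitude of $H_j$ depend sensitively on the PDE. Extracting a sufficiently sharp inequality from the strict positive-Ricci refinement of Modica, rather than from its non-strict counterpart, is where I expect the technical weight of the proof to concentrate, and is the main obstacle to completing the argument.
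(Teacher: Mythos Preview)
Your second-variation approach is genuinely different from the paper's, and the gap is exactly where you place it. Granting the Modica inequality $P_\eps<0$ on closed manifolds with $\Ric>0$ (which does follow from the Bochner identity applied at the maximum of $P_\eps$), all you obtain is the pointwise ceiling $\tfrac{\eps}{2}|\nabla u|^2<\eps^{-1}W(u)$; nothing forces $|\nabla u|$ to be near that ceiling on the nodal set, so there is no sign information on $\partial_\nu|\nabla u|$ there. After tracking orientations your setup gives $\dot\gamma(0)=\nu_1$ and $\dot\gamma(d)=-\nu_2$ with $\nu_j=\nabla u^j/|\nabla u^j|$, and the boundary term becomes a sum of two contributions of the form $\partial_{\nu_j}|\nabla u^j|/|\nabla u^j|=\partial_{\nu_j}P_\eps^j/(\eps|\nabla u^j|^2)$ at $x_j$; the bare inequality $P_\eps^j<0$ says nothing about these normal derivatives, and I do not see a mechanism to force $H_2-H_1<\int_0^d\Ric(\dot\gamma,\dot\gamma)$. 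There is also a secondary issue you pass over: nothing guarantees that the distance-realising points $x_j$ are regular points of $Z(u_\eps^j)$, so the first-variation orthogonality and the mean curvatures $H_j$ may not even be defined.

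The paper bypasses nodal-set geometry entirely. Its engine is a purely PDE comparison: if two non-constant solutions satisfy $u^1<u^2$ on all of $M$, one reaches a contradiction. Since $\Ric>0$ makes every non-constant solution unstable (plug $|\nabla u|$ into the second variation), one perturbs $u^1$ upward by a small multiple of its positive first eigenfunction to obtain a strict subsolution still below $u^2$, and then runs the parabolic Allen--Cahn flow from that datum; the flow is monotone increasing and converges to a stable solution $u_+\leq u^2$, but under $\Ric>0$ the only stable solutions are the constants $\{-1,0,1\}$, forcing $u^2\equiv 1$. The Frankel-type statements (A) and (B) then follow by a short maximum-principle argument on the connected components of $M\setminus Z(u_\eps^2)$, which converts the disjoint-nodal-set hypothesis together with the connectedness assumption into a global ordering $u^2<u^1$ on $M$ after a suitable sign flip.
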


This Frankel property is of pivotal importance to our approach of the rigidity problem,
and is one of the main reasons for the brevity of our arguments. 
Moreover it allows us to obtain ancillary rigidity results in spheres with
dimension larger than three, the caveat being that in absence of%
~\cite{ChodoshMantoulidis2018} the convergence must be assumed to be with
multiplicity one.
We illustrate this with the Clifford-type minimal hypersurfaces 
\begin{equation}
	T_{p,q} =  \bS^p(\sqrt{p/n}) \times \bS^q(\sqrt{q/n})
	\subset \bS^{n+1},
\end{equation}
where $p,q \in \bZ_{>0}$ and $n = p + q \geq 2$.
By~\cite{AllardAlmgren_Uniqueness_Radial_Behaviour,%
HsiangLawson_Minimal_Submanifolds_of_Low_Cohomog} these are known
to have $\nu(T_{p,q}) = \nullity T_{p,q} = (p+1)(q+1)$.
Moreover they are homogeneous, and are stabilised by
$\SO(p) \times \SO(q) \subset \SO(n+2)$.
The rigidity result valid near these surfaces is similar to
Theorem~\ref{thm_main_result_clifford},
except that here no claim is made about the precise location of the nodal set.
(Here and in the remainder we write $(T_{p,q})_\delta \subset \bS^{n+1}$
for the tubular neighbourhood of $T_{p,q}$ with size $\delta > 0$.)

\begin{thm}
\label{thm_clifford_p_q}
Let $p,q \in \bZ_{>0}$ and $n = p+q \geq 2$.
There exist $0 < \eps_0,\delta < 1$ so that for all $0 < \eps < \eps_0$
there is a unique solution $u \in \calZ_{\eps}$ on $\bS^{n+1}$ with
$Z(u) \subset (T_{p,q})_\delta$ and
$(1 - \delta) \calH^n(T_{p,q}) \leq E_\eps(u) \leq (1 + \delta) \calH^n(T_{p,q})$,
up to rotation and change of sign.
Moreover $u$ is $\SO(p) \times \SO(q)$-symmetric, up to conjugation.
\end{thm}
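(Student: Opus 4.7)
The plan is to combine the assumed multiplicity-one convergence with the Frankel-type theorem proved earlier in this paper and the fact that every Jacobi field on $T_{p,q}$ is generated by an ambient Killing field, i.e.\ the integrability of $T_{p,q}$ as a minimal hypersurface. The first step is a graphical reduction: for any sequence of solutions $u_j \in \calZ_{\eps_j}$ with $\eps_j \to 0$ satisfying the hypotheses, the varifold convergence results of Hutchinson--Tonegawa, Tonegawa--Wickramasekera and Guaraco, together with the multiplicity-one convergence enforced by the tube containment and near-equality energy bounds, give $V_j \to |\Sigma|$ for a smooth minimal hypersurface $\Sigma$ contained in $(T_{p,q})_\delta$ of area close to $\calH^n(T_{p,q})$. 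Integrability of $T_{p,q}$ forces $\Sigma = R \cdot T_{p,q}$ for some rotation $R$, and after precomposing $u_j$ with $R$ the nodal set $Z(u_j)$ becomes a $C^k$-small normal graph over $T_{p,q}$.

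For existence I would use a Lyapunov--Schmidt reduction based on the profile ansatz $\bar u_\eps(x) = \eta(\dist(x,T_{p,q})/\eps)$, with $\eta$ the 1D heteroclinic. The linearisation at $\bar u_\eps$ has a $(p+1)(q+1)$-dimensional approximate kernel spanned by $X \bar u_\eps$, with $X$ ranging over a complement of the Killing fields tangent to $T_{p,q}$ inside the ambient Killing algebra of $\bS^{n+1}$. Restricting the construction to the $\SO(p) \times \SO(q)$-equivariant function space annihilates this kernel, and the implicit function theorem then produces an $\SO(p) \times \SO(q)$-symmetric solution $u^* \in \calZ_\eps$.

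For uniqueness, let $u \in \calZ_\eps$ be any solution satisfying the hypotheses. After an appropriate rotation its nodal set is also a small normal graph over $T_{p,q}$, so both $Z(u)$ and $Z(u^*)$ are connected and separating, inheriting these properties from $T_{p,q}$. The Frankel-type theorem then forces $Z(u) \cap Z(u^*) \neq \emptyset$. Combined with the invertibility, transverse to the $\SO(n+2)$-orbit of $u^*$, of the linearised Allen--Cahn operator at $u^*$ (the positive-$\eps$ analogue of integrability: Killing nullity equals full nullity for small $\eps$), this forces $u$ to lie in $\SO(n+2) \cdot u^* \cup (-\SO(n+2) \cdot u^*)$. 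This yields simultaneously uniqueness up to rotation and sign, and $\SO(p) \times \SO(q)$-symmetry up to conjugation.

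The main obstacle I foresee is establishing the matching Killing nullity $=$ full nullity at $u^*$ for small $\eps$. One approach is a compactness-and-contradiction argument: any excess nullity would, upon rescaling, produce a Jacobi field on $T_{p,q}$ outside the Killing span, contradicting the Allard--Almgren--Hsiang--Lawson computation $\nullity T_{p,q} = (p+1)(q+1)$. This spectral matching underpins both the existence construction via the IFT and the reduction of uniqueness to the Frankel-based intersection statement.
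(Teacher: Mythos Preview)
Your uniqueness argument has a gap in how Frankel is deployed. You establish that $Z(u)$ and $Z(u^*)$ meet at a point, then invoke transverse invertibility of $L_{\eps,u^*}$ to conclude that $u$ lies in the $\SO(n+2)$-orbit of $u^*$. But the bare intersection of nodal sets gives no control placing $u$ inside an implicit-function-theorem neighbourhood of that orbit. If a uniform-in-$\eps$ IFT argument of the kind you sketch actually goes through---this is precisely the delicate analysis of~\cite{GuaracoMarquesNevesMultiplicityOne} that the paper explicitly sidesteps---then Frankel is superfluous; and if it does not, the nodal intersection does nothing to rescue it.

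The paper's route differs in one decisive respect: the Killing-nullity-equals-full-nullity matching is applied not to the constructed $u^*$ but to the \emph{arbitrary} solution $u$. From Lemma~\ref{lem_stabiliser_conjugate} one has $\nu(u) \geq \nu(T_{p,q})$, while the Chodosh--Mantoulidis bound~\eqref{eq_nullity_lower_bound} together with integrability of $T_{p,q}$ gives $\nullity u \leq \nullity T_{p,q} = \nu(T_{p,q})$. Hence $\dim G_u = \dim G_{T_{p,q}}$, and after a rotation $u$ itself is $G_{T_{p,q}}$-invariant; its nodal set is then a single $G_{T_{p,q}}$-orbit (connectedness coming from multiplicity one), as is $Z(u^*)$. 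Now Frankel is decisive: two principal orbits of a cohomogeneity-one action that intersect must coincide, so $Z(u) = Z(u^*)$, and Lemma~\ref{lem_nodal_set_allen_cahn} (via~\cite{BrezisOswaldSublinearPDE}) forces $u = \pm u^*$. Your spectral-matching instinct is right; the point is to apply it to $u$ rather than to $u^*$, so that Frankel compares orbits rather than arbitrary graphs. For existence the paper simply quotes~\cite{CajuGasparAllenCahnWithSymmetry} in place of a Lyapunov--Schmidt reduction.
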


In the more symmetric case where $p = q$ and $n = 2p$, the information about
the nodal set can be recovered. We state this corollary in a slightly different way.

\begin{cor}
	\label{cor_clifford_n}
Let $p \in \bZ_{>0}$ and $n = 2p$. Let $\eps_j \to 0$, and $u_j \in \calZ_{\eps_j}$
be so that $V(\eps_j,u_j) \to \abs{T_{p,p}}$. For large $j$, the nodal set of $u_j$
is a rigid copy of $T_{p,p}$ and $u_j$ is a symmetric solution around it.
\end{cor}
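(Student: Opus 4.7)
The plan is to reduce $u_j$ to a function of a single variable via Theorem~\ref{thm_clifford_p_q}, and then exploit the swap isometry available only when $p = q$, combined with the Frankel-type result, to locate the nodal set exactly.

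For $j$ large, the assumed varifold convergence $V(\eps_j, u_j) \to \abs{T_{p,p}}$ delivers both Hausdorff convergence of the nodal sets (so $Z(u_j) \subset (T_{p,p})_\delta$) and mass convergence (so the sandwich bound on $E_{\eps_j}(u_j)$ holds). Hence Theorem~\ref{thm_clifford_p_q} applies, and produces a rotation $g_j \in \SO(2p+2)$ such that $u_j \circ g_j$ has the standard $\SO(p) \times \SO(q)$-symmetry of $T_{p,p}$. Writing points of $\bS^{n+1}$ as $(x, y) \in \RR^{p+1} \times \RR^{p+1}$ with $\abs{x} = \cos\theta$ and $\abs{y} = \sin\theta$, this reduces $u_j$ to a function of the coordinate $\theta \in [0, \pi/2]$ alone, $u_j = f_j(\theta)$, whose nodal set takes the form $\{\theta = \theta_j\}$ for a single $\theta_j$ close to $\pi/4$ (multiple zeros of $f_j$ in the tube would violate the energy bound).

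Next I invoke the key feature of the case $p = q$: the swap $\sigma \colon (x, y) \mapsto (y, x)$ is an isometry of $\bS^{n+1}$ that preserves the standard $T_{p,p}$ setwise but exchanges its two complementary tubes, an isometry with no analogue when $p \neq q$. The pullback $\sigma^* u_j$ is then another Allen--Cahn solution, with nodal set $\{\theta = \pi/2 - \theta_j\}$, itself contained in $(T_{p,p})_\delta$ and connected (each nodal set is a product $\bS^p \times \bS^p$). Since $\bS^{n+1}$ has positive Ricci curvature, Proposition~\ref{prop_frankel_both_connected} gives $Z(u_j) \cap Z(\sigma^* u_j) \neq \emptyset$. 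But $\{\theta = \theta_j\}$ and $\{\theta = \pi/2 - \theta_j\}$ are disjoint level sets of $\theta$ unless $\theta_j = \pi/2 - \theta_j$, forcing $\theta_j = \pi/4$. In the rotated frame $Z(u_j) = T_{p,p}$; undoing $g_j$ shows that the original nodal set is the rigid copy $g_j^{-1}\cdot T_{p,p}$, about which $u_j$ is symmetric by construction.

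The argument is short given the machinery already in place. The main input is Theorem~\ref{thm_clifford_p_q}, which supplies both the uniqueness and the cohomogeneity-one symmetry that reduce $u_j$ to $f_j(\theta)$; once this reduction is in hand, the only remaining issue is the possibility that $\theta_j \neq \pi/4$. The genuinely new ingredient, and what makes the case $p = q$ special, is the pairing of the swap isometry $\sigma$ with the Frankel-type theorem to upgrade the approximate identity $\theta_j \approx \pi/4$ to exact equality.
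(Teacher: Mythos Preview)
Your proposal is correct and follows essentially the paper's route: reduce to a function of the single variable $\theta$ via the symmetry supplied by Theorem~\ref{thm_clifford_p_q}, then invoke the Frankel-type result to force $\theta_j = \pi/4$. The only variation is in the comparison solution fed to Frankel: the paper compares $v_j = u_j \circ g_j$ with the reference solution $u_{T_{p,p},\eps_j}$ from the $p=q$ case of Lemma~\ref{lem_unique_symmetric_solution} (whose nodal set is exactly $T_{p,p}$), whereas you compare $v_j$ with its own reflection $\sigma^* v_j$. Both arguments rest on the swap isometry $(x,y)\mapsto(y,x)$---the paper uses it inside Lemma~\ref{lem_unique_symmetric_solution} to build the odd reference, you use it directly---and both reach the same conclusion by the same mechanism.
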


Theorem~\ref{thm_clifford_p_q} and Corollary~\ref{cor_clifford_n} are
proved in essentially the same way as the main result,
Theorem~\ref{thm_main_result_clifford}. We give the necessary modifications
to the higher-dimensional setting in Appendix~\ref{app_modifications} but
do not repeat the overlapping arguments.
In general, beyond the restrictive assumptions on the limit minimal surface%
---homogeneity and integrability through rotations---%
our arguments are quite flexible, and apply in closed manifolds with
positive Ricci curvature, of dimension three or larger.

\begin{ackn}
The author wishes to express his thanks to Costante Bellettini for
his support and invaluable discussions, as well as to Lucas Ambrozio,
Neshan Wickramasekera and Jason Lotay for helpful comments;
in fact this work was prompted by a question of Lucas Ambrozio.
The author's research is supported by the EPSRC under grant EP/S005641/1.
\end{ackn}

\section{Preliminary results}

Consider $\bS^3$ equipped with the standard round metric. For every $\eps > 0$
the \emph{$\eps$-Allen--Cahn equation} is the semilinear partial differential
equation 
\begin{equation}
	\label{eq_allen_cahn}
	\eps \Delta u - \eps^{-1} W'(u) = 0,
\end{equation}
where $W: \bR \to \bR$ is a regular non-negative even potential
with a positive, non-degenerate maximum at the origin
and two minima at $\pm 1$, where $W(\pm 1) = 0$.
(For example one may take $W(t) = (1 - t^2)^2/4$.)

This is the Euler--Lagrange equation of the \emph{Allen--Cahn functional},
$E_\eps(u) = \int_{\bS^3} \frac{\eps}{2} \abs{\nabla u}^2 + \eps^{-1} W(u)$.
We write $\calZ_\eps$ for the space of (classical) solutions of this equation.
There is a standard way to associate to each solution $u = u_\eps$
a \emph{varifold} $V_\eps = V(\eps,u)$, meaning a Radon measure on
the Grassmann bundle $\Gr_2(\bS^3)$.
Given any $\varphi \in C(\Gr_2(\bS^3))$, this is defined by
\begin{equation}
	\label{eq_varifold_definition}
	V_\eps(\varphi) =\frac{1}{2 \sigma}  \int_{\bS^3} \eps \abs{\nabla u(X)}^2
\varphi(X,T_X \{ u = u(X) \}) \intdiff \calH^3(X)
\end{equation}
where $\{ u = u(X) \}$ is the
level set of $u$ through $X$ and
$\sigma = \int_{-1}^1 \sqrt{W(t)/ 2} \intdiff t$ is a normalising constant.
(We use very little in the way of varifold theory in the remainder, though one may
consult~\cite{Simon84} for unfamiliar vocabulary.)

\subsection{Index and nullity}

Let $(\eps_j \mid j \in \bN)$ be a positive sequence with $\eps_j \to 0$ as $j \to \infty$,
and $(u_j \mid j \in \bN)$ be a corresponding sequence with
$u_j \in \calZ_{\eps_j}$ for all $j$.
We assume uniform bounds for the energy and that the Morse index of these
solutions is fixed, namely there are $C > 0$ and $k \in \bZ_{\geq 0}$
so that for all $j \in \bN$,
\begin{equation}
E_{\eps_j}(u_j) \leq C  \text{ and } \indx u_j = k.
\end{equation}
A subsequence of the $V_j = V(\eps_j,u_j)$ (which we extract without relabelling)
converges to a stationary integral varifold, as was shown by Hutchinson--Tonegawa%
~\cite{HutchinsonTonegawa00}.
Relying on the regularity theory developed by Wickramasekera~\cite{Wickramasekera14},
the combined work of Tonegawa--Wickramasekera~\cite{TonegawaWickramasekera10}
and Guaraco~\cite{Guaraco15} established that this limit is supported
in a smooth embedded minimal surface $\Sigma \subset \bS^3$, and
\begin{equation}
	V_j \to Q \abs{\Sigma}
\end{equation}
with integer multiplicity $Q \in \bZ_{>0}$. (Here and throughout we write
$\abs{\Sigma}$ for the unit density varifold associated to $\Sigma$.)
Using different methods, Hiesmayr~\cite{HiesmayrAllenCahn} and Gaspar~\cite{Gaspar2017}
both proved that
\begin{equation}
	\label{eq_index_bound}
	\indx \Sigma \leq k;
\end{equation}
the former building on previous work of Tonegawa~\cite{Tonegawa05} in the stable case,
the latter by adapting computations of Le~\cite{Le2011,Le2015}.

As the three-sphere $\bS^3$ has positive Ricci curvature, the results of
Chodosh--Mantoulidis~\cite{ChodoshMantoulidis2018}
give that the convergence is with multiplicity one,
meaning $Q = 1$ and $V_j \to \abs{\Sigma}$ as $j \to \infty$.
They also show that 
\begin{equation}
	\label{eq_lower_bound_augmented_index}
	\limsup_{j \to \infty} (\indx u_j + \nullity u_j)
	\leq \indx \Sigma + \nullity \Sigma.
\end{equation}
Combining this with the lower semicontinuity of the Morse index, we find that
for large enough $j$,
\begin{equation}
	\label{eq_nullity_lower_bound}
	\nullity u_j \leq \nullity \Sigma.
\end{equation}

We are especially interested in the situation where the Morse index of the
$u_j$ is low, specifically where it is five at the most.
In this range the classification of minimal surfaces of Urbano~\cite{UrbanoLowIndex}
shows that for the limit minimal surface $\Sigma$,
\begin{itemize}
	\item either $\indx \Sigma = 1$ and $\Sigma = S$ is an equatorial sphere,
	\item or $\indx \Sigma = 5$ and $\Sigma = T$ is a Clifford torus.
\end{itemize}
By~\cite{HsiangLawson_Minimal_Submanifolds_of_Low_Cohomog} they 
respectively have $\nullity S = 3$ and $\nullity T = 4$.

\subsection{Action by rotations}

The orthogonal group $\SO(4)$ acts on the space of minimal surfaces in $\bS^3$,
and preserves their index and nullity. 
Given a surface $\Sigma \subset \bS^3$, 
let as usual $\stab \Sigma  = \{ P \in \SO(4) \mid P( \Sigma) = \Sigma \}$ and
$\orb \Sigma = \{ P( \Sigma ) \subset \bS^3 \mid P \in \SO(4) \}$.
The former is a closed Lie subgroup of $\SO(4)$ denoted $G_\Sigma$, and 
$\dim \stab \Sigma + \dim \orb \Sigma = 6$, the dimension of $\SO(4)$.

Let $\Sigma$ be minimal and $L_\Sigma$ be the \emph{Jacobi operator} on $\Sigma$.
We write $\indx \Sigma \in \bZ_{\geq 0}$
for the number of strictly negative eigenvalues of $L_\Sigma$,
counted with multiplicity.
The kernel of $L_\Sigma$ is spanned by the so-called~\emph{Jacobi fields}.
Those that stem from the action of $\SO(4)$ are also called~\emph{Killing Jacobi fields}.
They span a linear subspace of $L^2(\Sigma)$ of dimension $\nu(\Sigma) = \dim \orb \Sigma$;
this is the \emph{Killing nullity}.
(Here we follow the conventions of Hsiang--Lawson%
~\cite{HsiangLawson_Minimal_Submanifolds_of_Low_Cohomog}.)
For any minimal surface $\Sigma \subset \bS^3$, 
\begin{equation}
\nu(\Sigma) \leq \nullity \Sigma.
\end{equation}
We are especially interested in surfaces for which $\nu$ equals the full nullity. 
Surfaces with this property are sometimes said to be~\emph{integrable through rotations}.
The equatorial sphere and the Clifford torus both have this property,
and quoting~\cite{HsiangLawson_Minimal_Submanifolds_of_Low_Cohomog} they respectively have
\begin{equation}
	\nullity S = \nu(S) = 3 \text{ and } \nullity T = \nu(T) = 4.
\end{equation}

Let $\Sigma \subset \bS^3$, with isotropy subgroup $G_\Sigma \subset \SO(4)$.
The surface is called \emph{homogeneous} if $G_\Sigma$ acts transitively on it.
This is equivalently expressed by saying that $\Sigma$ is an orbit
of the action of $G_\Sigma$ on $\bS^3$.
The equatorial sphere and the Clifford torus both satisfy this property;
in fact they are the only homogeneous minimal surfaces in $\bS^3$~%
\cite{HsiangLawson_Minimal_Submanifolds_of_Low_Cohomog}.

These notions have natural analogues in the setting of the Allen--Cahn equation. 
The group $\SO(4)$ gives a right action on functions defined on $\bS^3$
by pre-composition, where $P \in \SO(4)$ sends a function $u$ to $u \circ P$.
Given a function $u$ on $\bS^3$ we write
$\stab u = \{ P \in \SO(4) \mid u \circ P = u \}$ and
$\orb u = \{ u \circ P \mid P \in \SO(4) \}$.
Again we have $\dim \stab u + \dim \orb u = 6$.
(To see this fix any $\alpha \in (0,1)$; by elliptic regularity
any critical point has $u \in C^{2,\alpha}(\bS^3)$. Consider the orbit map
$P \in \SO(4) \mapsto u \circ P \in C^{2,\alpha}(\bS^3)$. Upon quotienting
by the stabiliser $G_u = \stab u$ this defines a homeomorphism onto its image.
In particular $\SO(4) / G_u$ and $\orb u$ have the same dimension, namely
$6 - \dim G_u$. Note that a rigorous rederivation of the analogous orbit-stabiliser
identity we quoted above for minimal surfaces $\Sigma \subset \bS^3$ would go
along the same lines, working in a suitable Banach space of embeddings into $\bS^3$.)

Let $M_{\eps}$ be the semilinear operator corresponding to~\eqref{eq_allen_cahn}.
The linearisation of $M_\eps$ around a function $u$ is (up to multiplication by $\eps$)
the linear operator $L_{\eps,u} = \Delta - \eps^{-2} W''(u)$.
(This describes the second variation of $-E_{\eps}$ at $u$ via integration by parts.)
This operator has finite-dimensional kernel, whose dimension is denoted
$\nullity u \in \bZ_{ \geq 0}$. We also write $\indx u \in \bZ_{\geq 0}$
for the number of strictly negative eigenvalues of $L_{\eps,u}$ counted with multiplicity.
The action of $\SO(4)$ preserves the Allen--Cahn functional,
$E_{\eps}( u \circ P) = E_\eps ( u)$ for all $P \in \SO(4)$.
It also maps $\calZ_\eps$ to itself, and for all $P \in \SO(4)$,
$\indx u \circ P = \indx u$ and $\nullity u \circ P = \nullity u$.
The invariance of $E_\eps$ under $\SO(4)$ also means that the action
generates functions in the kernel of $L_{\eps,u}$, which span a space of dimension
$\nu(u) \in \bZ_{\geq 0}$, the \emph{Killing nullity} of $u$.
As above we have $\nu(u) = \dim \orb u$ and 
\begin{equation}
	\label{eq_u_nullity_killing_comparison}
	\nu(u) \leq \nullity u.
\end{equation}
(The fact that $\nu(u) = \dim \orb u = 6 - \dim G_u$ is not hard to see. 
To derive this rigorously requires working again with the orbit map
$P \in \SO(4) \to u \circ P \in C^{2,\alpha}(\bS^3)$ we used above for
the orbit-stabiliser identity. This differentiates to the map $A \in \so(4)
\mapsto - \langle \nabla u, \xi_A \rangle$, where $\xi_A$ is the Killing vector
field corresponding to $A$. Writing $\mathfrak{g}_u \subset \so(4)$ for the 
Lie algebra corresponding to $G_u$, we obtain a linear isomorphism between
$\so(4) / \mathfrak{g}_u$ and the tangent space to the orbit of $u$.
In particular there are precisely $\nu = 6 - \dim \mathfrak{g}_u = 6 - \dim G_u$
Killing vector fields so that $\langle \nabla u , \xi_1 \rangle, \dots,
\langle \nabla u,\xi_\nu \rangle$ forms a linearly independent family.)

\subsection{Consequences of multiplicity one convergence}

Let $\eps_j \to 0$ and $(u_j \mid j \in \bN)$ be a sequence of critical points
in $\bS^3$ with $V(\eps_j,u_j) \to \abs{\Sigma}$, where $\Sigma \subset \bS^3$
is an embedded minimal surface.
Using either a simple calculation as in~\cite{CajuGasparAllenCahnWithSymmetry}
or Lemma~\ref{lem_stabiliser_conjugate} as justification, one finds that
for large $j$,
\begin{equation}
	\label{eq_killing_nullity_decreases_in_the_limit}
	\nu(\Sigma) \leq \nu(u_j).
\end{equation}
Assume additionally that $\Sigma$ is integrable through rotations, that is
$\nu(\Sigma) = \nullity \Sigma$. Stringing together~\eqref{eq_nullity_lower_bound},
\eqref{eq_u_nullity_killing_comparison}
and~\eqref{eq_killing_nullity_decreases_in_the_limit}, we find that
eventually
\begin{equation}
\nullity u_j = \nu(u_j).
\end{equation}

We now specialise this to the setting of low index in $\bS^3$, and show that
if $\indx u_j \leq 5$ then either $\indx u_j = 1$ or $5$, provided $j$
is large enough.
To this end assume that $E_{\eps_j}(u_j) \leq C$ and $\indx u_j \leq 4$
along the sequence. By~\eqref{eq_index_bound} and Urbano's classification,
$V(\eps_j,u_j)$ converges to an equatorial sphere $S \subset \bS^3$;
moreover by~\cite{ChodoshMantoulidis2018} the convergence is with multiplicity one,
that is $V(\eps_j,u_j) \to \abs{S}$ as $j \to \infty$.
From the above we have that for large $j$, $\nullity u_j = \nu(u_j)  = 3$.
At the same time $4 \geq \indx u_j + \nullity u_j \geq \indx u_j + 3$
by~\eqref{eq_lower_bound_augmented_index}.
As $u_j$ is unstable, we find $\indx u_j = 1$.

We apply this observation to the context of Corollary~\ref{cor_min_max_values},
where no assumption is made about the index of the solutions.
Instead there only imposes that the $u_j \in \calZ_{\eps_j}$
have $E_{\eps_j}(u_j) = c_{\eps_j}(5)$.
This notwithstanding, critical points obtained through five-parameter min-max
arguments as in~\cite{GasparGuaraco2016} have index at most five. Taking the
conclusions of Theorem~\ref{thm_main_result_clifford} for granted,
and as $c_{\eps_j}(4) < c_{\eps_j}(5)$, the functions obtained from min-max
methods must be symmetric solutions vanishing on a Clifford torus.
Moreover $c_{\eps_j}(5) \to 2 \pi^2$ as $j \to \infty$.
By~\cite{MarquesNevesWillmore}, the Clifford tori are the only minimal surfaces
in $\bS^3$ whose area has this value (or a fraction thereof),
so that $V(\eps_j,u_j) \to \abs{T}$, after extracting a subsequence
if necessary.
Arguing as above one obtains index bounds for the $u_j$, which prove that they
too must be of the rigid form described in Theorem~\ref{thm_main_result_clifford}.


\section{A Frankel-type result for the Allen--Cahn equation}

Let $(M,g)$ be a closed manifold with dimension $n+1$ and positive Ricci curvature.
We formulate two types of results for the Allen--Cahn equation in the style
of Frankel's theorem~\cite{FrankelPositiveCurvature}: 
the first concerning the solutions themselves, and the second their nodal sets.

\begin{prop}
\label{prop_frankel_allen_cahn}
Let $\eps > 0$ and $u_\eps^1 \neq u_\eps^2$
be two solutions of~\eqref{eq_allen_cahn} on $M$.
If $u_\eps^1 \leq u_\eps^2$ then one of the two is constant.
\end{prop}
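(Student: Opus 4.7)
The plan is to combine the strong maximum principle with a principal eigenvalue argument. Set $w := u_\eps^2 - u_\eps^1 \geq 0$; expanding via the fundamental theorem of calculus yields the linear elliptic equation
\[
\eps^2 \Delta w = a(x)\, w, \qquad a(x) := \int_0^1 W''(u_\eps^1 + tw)\, dt.
\]
Since $w \not\equiv 0$ by hypothesis, Hopf's strong maximum principle promotes this to $w > 0$ strictly on all of $M$. This positivity has a classical spectral interpretation via Allegretto--Piepenbrink: $w$ is the principal eigenfunction of $\mathcal{L} := -\eps^2 \Delta + a$ with eigenvalue zero, and hence the associated quadratic form $Q[\phi] := \int_M (\eps^2 |\nabla \phi|^2 + a\, \phi^2)\, dV$ is non-negative on $H^1(M)$, with equality forcing $\phi$ to be a constant multiple of $w$.

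I would now argue by contradiction, assuming neither $u_\eps^1$ nor $u_\eps^2$ is constant, so that $\nabla u_\eps^i \not\equiv 0$. Test $Q$ against $\phi = |\nabla u_\eps^i|$: Kato's inequality $|\nabla |\nabla u_\eps^i||^2 \leq |\nabla^2 u_\eps^i|^2$ combined with the integrated Bochner identity for the Allen--Cahn solution $u_\eps^i$ gives
\[
Q[|\nabla u_\eps^i|] \leq -\eps^2 \int_M \Ric(\nabla u_\eps^i, \nabla u_\eps^i)\, dV + \int_M \bigl(a - W''(u_\eps^i)\bigr)\, |\nabla u_\eps^i|^2\, dV.
\]
A direct Taylor computation (taking $W(t) = (1-t^2)^2/4$ for concreteness) yields the explicit identities $a - W''(u_\eps^1) = w\,(2 u_\eps^1 + u_\eps^2)$ and $a - W''(u_\eps^2) = -w\,(2 u_\eps^2 + u_\eps^1)$. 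The hypothesis $\Ric > 0$ together with $\nabla u_\eps^i \not\equiv 0$ makes the first term strictly negative -- this is the only point in the argument where the positive Ricci hypothesis is used. A case analysis on the signs of $2 u_\eps^i + u_\eps^{3-i}$, using the constraint $u_\eps^1 \leq u_\eps^2$ and the pointwise bound $|u_\eps^i| \leq 1$ from the maximum principle, aims to show that at least one of $Q[|\nabla u_\eps^1|]$, $Q[|\nabla u_\eps^2|]$ is then strictly negative, contradicting $Q \geq 0$.

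The main obstacle is the intermediate regime where neither $2u_\eps^1 + u_\eps^2$ nor $2u_\eps^2 + u_\eps^1$ has a definite sign on $M$: the uniform-sign cases close the argument immediately, but in the mixed regime a single-gradient test function alone does not suffice. Overcoming this will likely require a refinement -- for instance a localized or convex combination of the two gradients, or a test function more carefully adapted to the sign structure dictated by the ordering $u_\eps^1 \leq u_\eps^2$ -- in order to exhibit the needed negative direction for $Q$.
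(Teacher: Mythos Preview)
Your approach is genuinely different from the paper's, and the gap you identify is real rather than cosmetic. The paper never attempts to control the cross-term $(a - W''(u_\eps^i))\,|\nabla u_\eps^i|^2$; it bypasses it entirely via a parabolic/monotone argument.

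Briefly: the Bochner identity you wrote down, applied to a \emph{single} solution $u$ against its own linearised operator $-\eps^2\Delta + W''(u)$, already shows that in positive Ricci every stable Allen--Cahn solution is constant. The paper uses this once and then argues as follows. By the strong maximum principle $u_\eps^1 < u_\eps^2$. If $u_\eps^1$ is non-constant it is unstable, so its first eigenfunction $\varphi_1 > 0$ has eigenvalue $\lambda_1 < 0$, and for small $\theta > 0$ the function $u_\eps^1 + \theta\varphi_1$ is a strict subsolution of~\eqref{eq_allen_cahn} lying strictly below $u_\eps^2$. The parabolic Allen--Cahn flow started from this subsolution is monotone increasing and trapped below the barrier $u_\eps^2$, hence converges to a stable solution $u_+$ with $u_\eps^1 < u_+ \leq u_\eps^2$. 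By the first step $u_+$ is constant; the ordering forces $u_+ \equiv 1$, whence $u_\eps^2 \equiv 1$.

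The reason your direct spectral route stalls is structural: the non-negativity of $Q$ is tied to the averaged potential $a$, whereas Bochner only controls $|\nabla u_\eps^i|$ against $W''(u_\eps^i)$. The discrepancies $a - W''(u_\eps^i)$ genuinely have no sign in the mixed regime, and no convex combination of $|\nabla u_\eps^1|$ and $|\nabla u_\eps^2|$ is likely to cancel them, since the two error densities carry unrelated weights $|\nabla u_\eps^1|^2$ and $|\nabla u_\eps^2|^2$. The paper decouples the two ingredients: instability of non-constant solutions is established once via Bochner at a single $u$, and the ordering $u_\eps^1 < u_\eps^2$ is used only as a barrier for the flow. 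If you want to salvage your line of attack, the cleanest repair is exactly this decoupling---drop the averaged operator $\mathcal{L}$ and instead produce a stable solution between $u_\eps^1$ and $u_\eps^2$ by sub/supersolution or flow methods.
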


We postpone the proof of this for now, and move on to the Frankel-type
result for nodal sets. 
Let us remark first that the nodal sets of two solutions
$u_\eps^1, u_\eps^2 \in \calZ_\eps$ do \emph{not} intersect in general.
Indeed, Guaraco--Marques--Neves~\cite[Expl.\ 1]{GuaracoMarquesNevesMultiplicityOne}
give a short construction of solutions near equatorial spheres in $\bS^{n+1}$
for which this fails.
(In their example one critical point vanishes on the equatorial sphere,
and the other vanishes along two hypersurfaces lying on either side of it,
a small distance away.)
One must therefore impose natural hypotheses on the nodal sets to ensure
that they meet.
Given a function $u$ on $M$, we write
$Z(u) = \{ u = 0 \}$ for its nodal set, and say that it is \emph{separating}
if $M \setminus Z(u)$ has exactly two connected components.

\begin{prop}
	\label{prop_frankel_nodal_set}
Let $\eps > 0$ and $u^1_\eps, u^2_\eps \neq \pm 1$
be two solutions of~\eqref{eq_allen_cahn} on $M$.
If $Z(u_\eps^1)$ is separating, then either $Z(u_\eps^1) \cap Z(u_\eps^2) \neq \emptyset$ or
\begin{equation}
	Z(u_\eps^2) \cap \{ u_\eps^1 > 0 \} \neq \emptyset 
	\text{ and }
	Z(u_\eps^2) \cap \{ u_\eps^1 < 0 \} \neq \emptyset.
\end{equation}
\end{prop}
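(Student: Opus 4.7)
I would argue by contradiction and reduce to Proposition~\ref{prop_frankel_allen_cahn}. Suppose the conclusion fails, so $Z(u^1_\eps) \cap Z(u^2_\eps) = \emptyset$ and $Z(u^2_\eps)$ misses at least one component of $M \setminus Z(u^1_\eps)$. Writing $\Omega_\pm = \{\pm u^1_\eps > 0\}$ and using the symmetry $u^1_\eps \mapsto - u^1_\eps$, which preserves~\eqref{eq_allen_cahn} because $W$ is even, I may assume $Z(u^2_\eps) \subset \Omega_+$. Since $\overline{\Omega_-}$ is connected and disjoint from $Z(u^2_\eps)$, the solution $u^2_\eps$ has constant sign there; replacing $u^2_\eps$ by $-u^2_\eps$ if needed, I may take $u^2_\eps > 0$ on $\overline{\Omega_-}$.

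The strategy is to show the pointwise ordering $u^1_\eps \leq u^2_\eps$ on all of $M$, and then invoke Proposition~\ref{prop_frankel_allen_cahn} to force one of the two to be constant. The only constant solutions are $0$ and $\pm 1$, so it remains to rule out $u^1_\eps \equiv 0$ or $u^2_\eps \equiv 0$: but the first is incompatible with $Z(u^1_\eps)$ being separating, while the second gives $Z(u^2_\eps) = M$ which contradicts $Z(u^2_\eps) \subset \Omega_+ \subsetneq M$. On $\overline{\Omega_-}$ the ordering is immediate from $u^1_\eps \leq 0 < u^2_\eps$, and by continuity plus compactness of $\overline{\Omega_-}$ it actually holds on an open neighbourhood $U$ of $\overline{\Omega_-}$; consequently the open set $N := \{u^1_\eps > u^2_\eps\}$ satisfies $\overline N \subset M \setminus U \subset \Omega_+$.

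The main obstacle is to show that $N = \emptyset$. On $\overline N$ the nonnegative function $\varphi := u^1_\eps - u^2_\eps$ vanishes on $\partial N$ and satisfies a linear elliptic equation $\eps^2 \Delta \varphi = c(x) \varphi$ with coefficient $c(x) = \int_0^1 W''(t u^1_\eps(x) + (1-t) u^2_\eps(x)) \intdiff t$. A strong maximum principle argument applied at the extrema of $u^i_\eps$ (using that $W''(\pm 1) > 0$) confines both solutions to $(-1, 1)$, but $W''$ is not sign-definite on this interval, so the usual maximum principle does not close the case. The hard part will therefore be to exploit the positive Ricci hypothesis -- presumably through a Bochner-type identity of the kind already appearing in the proof of Proposition~\ref{prop_frankel_allen_cahn} itself -- to rule out a nontrivial $\varphi$, and thereby establish the global ordering needed for the contradiction.
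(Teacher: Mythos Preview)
Your overall strategy matches the paper's: negate the conclusion, normalise signs so that $Z(u^2_\eps) \subset \{u^1_\eps > 0\}$, establish a global pointwise ordering between the two solutions, and invoke Proposition~\ref{prop_frankel_allen_cahn}. You also correctly identify that the naive maximum principle on $N = \{u^1_\eps > u^2_\eps\}$ fails because the linearised coefficient $c(x)$ changes sign. The gap is in your diagnosis of how to close this step.

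The paper uses \emph{no} Ricci curvature or Bochner identity here; positive Ricci enters only through the black-box appeal to Proposition~\ref{prop_frankel_allen_cahn} at the very end. The missing idea is to decompose $M$ differently. On your set $N$ the boundary data give only the equality $u^1_\eps = u^2_\eps$, which is why the comparison stalls. The paper instead decomposes $M \setminus Z(u^2_\eps)$ into its connected components $U_0, U_1, \dots$; the separating hypothesis forces the connected set $\{u^1_\eps < 0\}$ into a single component, say $U_0$. On the boundary of each relevant subregion one of the two solutions \emph{vanishes} while the other has a strict sign, so the boundary inequality is strict. This is exactly the setup in which a sub/supersolution comparison lemma for~\eqref{eq_allen_cahn} (the paper quotes \cite[Cor.~7.4]{GuaracoMarquesNevesMultiplicityOne}) yields a strict inequality throughout the subregion. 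Running over all components produces the global ordering. So the fix is a smarter choice of subdomains---engineered so that the boundary data are strictly one-sided---rather than any curvature argument.
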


\begin{proof}
We argue by contradiction, and assume that $Z(u_\eps^2) \subset \{ u_\eps^1 > 0 \}$.
Divide the complement of $Z(u_\eps^2)$ into its connected components,
$M \setminus Z(u_\eps^2) = \cup_{j = 0}^N U_j$ where $N \in \bZ_{\geq 0} \cup \{ \infty \}$.
(There are at most countably many of these, and the proof is the same whether
$N$ is finite or infinite.)
We decompose $\{ u_\eps^1 < 0 \}$ by conditioning on the $U_j$, and write
$\{ u_\eps^1 < 0 \} = \cup_{j=0}^N \{ u_\eps^1 < 0 \} \cap U_j$.
As $Z(u_\eps^1)$ is separating, only of these is non-empty, and
$\{ u_\eps^1 < 0 \} \subset U_0$ say.
Now on the one hand we may assume that $u_\eps^2 < 0$ on $U_0$, flipping
its sign if necessary. On the other hand, $\bdary U_0 \subset \{ u_\eps^1 > 0 \}$
and thus $\bdary (U_0 \cap \{ u_\eps^1 < 0 \}) \subset \bdary \{ u_\eps^1 < 0 \}$.
Therefore $u_\eps^1 = 0$ on $\bdary (U_0 \cap \{ u_\eps^1 < 0 \})$.
A simple consequence of the maximum principle%
---see~\cite[Cor.\ 7.4]{GuaracoMarquesNevesMultiplicityOne}---%
means that $u_\eps^2 < u_\eps^1$ on $U_0 \cap \{ u_\eps^1 \leq 0 \}$.
This actually holds on the whole $U_0$, as $u_\eps^2 < 0 \leq u_\eps^1$
on $U_0 \setminus \{ u_\eps^1 \leq 0 \}$.
Let $U_j$ be one of the remaining components. 
If $u_\eps^2 \leq 0$ in $U_j$ then $u_\eps^2 \leq u_\eps^1$, so we may assume
$u_\eps^2 \geq 0$.
Then $u_\eps^2 = 0$ on $\bdary U_j$ while $u_\eps^1 > 0$ on $\clos{U}_j$.
Arguing as above we find $u_\eps^2 < u_\eps^1$ on $\clos{U}_j$.
As $U_j$ was arbitrary, we conclude that $u_\eps^2 < u_\eps^1$ on $M$,
which contradicts Proposition~\ref{prop_frankel_allen_cahn} because neither
function is constant equal $\pm 1$.
\end{proof}
\begin{rem}
Technically~\cite[Cor.\ 7.4]{GuaracoMarquesNevesMultiplicityOne} is stated
on regular domains, but an inspection of the proof reveals this hypothesis
to not be necessary.
\end{rem}

The following is an immediate consequence.

\begin{cor}
\label{cor_frankel_one_connected}
If $Z(u^1_\eps)$ is separating and $Z(u_\eps^2) \neq \emptyset$ is connected
then $Z(u^1_\eps) \cap Z(u^2_\eps) \neq \emptyset.$
\end{cor}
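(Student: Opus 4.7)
The plan is to derive this directly from Proposition~\ref{prop_frankel_nodal_set} by a simple connectedness argument. I will argue by contradiction, assuming $Z(u_\eps^1) \cap Z(u_\eps^2) = \emptyset$.

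Under this assumption, the dichotomy in Proposition~\ref{prop_frankel_nodal_set} forces the second alternative: $Z(u_\eps^2)$ meets both $\{u_\eps^1 > 0\}$ and $\{u_\eps^1 < 0\}$. I would then observe that, since $Z(u_\eps^2)$ is disjoint from $Z(u_\eps^1)$ by assumption, we have the inclusion
\begin{equation}
Z(u_\eps^2) \subset \{u_\eps^1 > 0\} \cup \{u_\eps^1 < 0\}.
\end{equation}
The two sets on the right are open in $M$, and hence also open (and disjoint) relative to $Z(u_\eps^2)$. Since each has non-empty intersection with $Z(u_\eps^2)$, they provide a non-trivial separation of $Z(u_\eps^2)$, contradicting the hypothesis that $Z(u_\eps^2)$ is connected.

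There is no real obstacle here beyond chaining together the statement of Proposition~\ref{prop_frankel_nodal_set} with the topological observation above; the entire content of the corollary is to translate the disjunction of that proposition into a connectedness obstruction. The only point to double-check is that no hypothesis of Proposition~\ref{prop_frankel_nodal_set} is missed: we do need $u_\eps^1, u_\eps^2 \neq \pm 1$, but this is implicit once $Z(u_\eps^2) \neq \emptyset$ and $Z(u_\eps^1)$ is separating (in particular non-empty), since the constants $\pm 1$ have empty nodal set.
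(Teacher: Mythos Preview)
Your proof is correct and follows exactly the route the paper intends: the paper simply states that Corollary~\ref{cor_frankel_one_connected} is ``an immediate consequence'' of Proposition~\ref{prop_frankel_nodal_set}, and your contradiction argument via connectedness of $Z(u_\eps^2)$ is precisely the intended one-line deduction. Your check that the hypothesis $u_\eps^1,u_\eps^2\neq\pm 1$ is implicit is also fine, though in the paper this is inherited from the standing assumptions preceding Proposition~\ref{prop_frankel_nodal_set}.
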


The next proposition can be derived independently,
using a similar argument. Either Corollary~\ref{cor_frankel_one_connected}
or Proposition~\ref{prop_frankel_both_connected} would be sufficient for
our purposes; we include both for the sake of completeness.

\begin{prop}
\label{prop_frankel_both_connected}
If $Z(u^1_\eps),Z(u^2_\eps) \neq \emptyset$ are connected then 
$Z(u^1_\eps) \cap Z(u_\eps^2) \neq \emptyset$.
\end{prop}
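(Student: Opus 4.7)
The plan is to argue by contradiction in parallel with Proposition~\ref{prop_frankel_nodal_set}, replacing the use of the separating hypothesis by a topological consequence of connectedness. Suppose $Z(u_\eps^1) \cap Z(u_\eps^2) = \emptyset$. Because each nodal set is connected and disjoint from the other, I can choose signs (first for $u_\eps^1$, then for $u_\eps^2$) so that $Z(u_\eps^2) \subset \{u_\eps^1 > 0\}$ and $Z(u_\eps^1) \subset \{u_\eps^2 < 0\}$; in particular $u_\eps^1 > u_\eps^2$ on $Z(u_\eps^1) \cup Z(u_\eps^2)$.

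Next I would decompose $M \setminus Z(u_\eps^2) = \bigsqcup_j U_j$ into connected components, letting $U_0$ be the component containing $Z(u_\eps^1)$; the sign choice forces $u_\eps^2 < 0$ throughout $U_0$. The key new step, which replaces the use of separation in the proof of Proposition~\ref{prop_frankel_nodal_set}, is the claim that $\{u_\eps^1 < 0\} \subset U_0$. Any connected component $V$ of $\{u_\eps^1 < 0\}$ is disjoint from $Z(u_\eps^2)$ and hence lies in a single $U_j$, while $\partial V \subset Z(u_\eps^1) \subset U_0$; if $j \neq 0$, then $\overline V = V \sqcup \partial V$ would decompose as the disjoint union of two non-empty sets open in $\overline V$, namely its intersections with the disjoint open sets $U_j$ and $U_0$, contradicting the connectedness of $\overline V$.

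With this inclusion in hand, the argument of Proposition~\ref{prop_frankel_nodal_set} carries over almost verbatim. On $U_0$ the maximum principle~\cite[Cor.\ 7.4]{GuaracoMarquesNevesMultiplicityOne} applied on $U_0 \cap \{u_\eps^1 \leq 0\}$ (on whose boundary $u_\eps^1 = 0$) gives $u_\eps^2 < u_\eps^1$ there, and this extends to the rest of $U_0$ since $u_\eps^2 < 0 \leq u_\eps^1$ elsewhere. On any $U_j$ with $j \neq 0$, the inclusion above forces $u_\eps^1 > 0$ on $\overline{U_j}$, and one concludes $u_\eps^2 < u_\eps^1$ either trivially (if $u_\eps^2 \leq 0$) or by the same maximum principle (if $u_\eps^2 \geq 0$, using $u_\eps^2 = 0 < u_\eps^1$ on $\partial U_j$). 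Therefore $u_\eps^2 < u_\eps^1$ everywhere on $M$, contradicting Proposition~\ref{prop_frankel_allen_cahn}.

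The principal obstacle is the topological claim that $\{u_\eps^1 < 0\} \subset U_0$; this is the single place where the connectedness of $Z(u_\eps^1)$ is used in an essential way, and the rest of the argument is mutatis mutandis from the preceding proposition.
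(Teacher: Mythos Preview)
Your argument is correct. The paper takes a shorter, more symmetric route that avoids the component decomposition altogether: after arranging $u_\eps^2<0$ on $Z(u_\eps^1)$ (by connectedness of $Z(u_\eps^1)$), it applies \cite[Cor.~7.4]{GuaracoMarquesNevesMultiplicityOne} directly on the whole region $\{u_\eps^1\le 0\}$ to get $u_\eps^2<u_\eps^1$ there, then runs the mirror argument (using connectedness of $Z(u_\eps^2)$) on $\{u_\eps^2\ge 0\}$, and on what remains one has $u_\eps^2<0<u_\eps^1$ trivially. Your approach instead recycles the machinery of Proposition~\ref{prop_frankel_nodal_set} and isolates a single topological claim, $\{u_\eps^1<0\}\subset U_0$, that stands in for the separating hypothesis. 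This makes the parallel with the earlier proposition very explicit, at the cost of a longer argument; the paper's version is terser because it exploits the symmetry between the two solutions rather than privileging one of them as the reference for a decomposition.
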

\begin{proof}
Again we argue by contradiction, assuming that $Z(u_\eps^1) \cap Z(u_\eps^2) = \emptyset$.
Split $Z = Z(u_\eps^1) = Z_- \cup Z_+$ according to the sign of $u_\eps^2$.
As $Z$ is connected, only one of these can be non-empty, and $Z = Z_-$ say.
After perhaps flipping the sign of $u_\eps^1$, we find that $u_\eps^2 < u_\eps^1$
in $\{ u_\eps^1 \leq 0 \}$ by~\cite[Cor.\ 7.4]{GuaracoMarquesNevesMultiplicityOne}.
The mirror argument shows that also $u_\eps^2 < u_\eps^1$ in $\{u_\eps^2 \geq 0 \}$.
On the remaining region $u_\eps^2 < 0 < u_\eps^1$, which means 
$u_\eps^2 < u_\eps^1$ is established on the entirety of $M$; the conclusion
follows from Proposition~\ref{prop_frankel_allen_cahn}.
\end{proof}

We now turn to the proof of Proposition~\ref{prop_frankel_allen_cahn}.
For this we use the \emph{parabolic Allen--Cahn equation}
with initial data a bounded function $u_0 \in C^1(M)$,
which given $\eps > 0$ and $T \in (0,\infty]$ is defined to be
\begin{equation}
	\label{eq_parabolic_allen_cahn}
\begin{cases}
	u_t = \eps \Delta u - \eps^{-1} W'(u) & \text{in $M \times [0,T)$}, \\
	u(0,\cdot) = u_0 & \text{on $M$}.
\end{cases}
\end{equation}

\begin{lem}
\label{lem_flow_subsolution}
Let $u_0 \in C^1(M)$ be a weak subsolution of~\eqref{eq_allen_cahn}
with $\abs{u_0} \leq 1$.
Then the solution to~\eqref{eq_parabolic_allen_cahn} exists for all time,
and $-1 \leq u(s,\cdot) \leq u(t,\cdot) \leq 1$ for all $0 \leq s \leq t$.
As $t \to \infty$, $u(t,\cdot)$ converges to a smooth function $u_+$.
Moreover $u_+$ is a stable solution of~\eqref{eq_allen_cahn}, and can be
characterised as the least element of $\{ v \in \calZ_\eps \mid v \geq u_0 \}$.
\end{lem}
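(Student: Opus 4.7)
The plan is to combine standard semilinear parabolic theory with comparison principles, and to proceed in four steps that correspond to the four conclusions of the lemma.

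First I would establish global existence and boundedness. Local existence and uniqueness of a classical solution $u \in C^\infty(M \times (0, T_{\max}))$ is standard for~\eqref{eq_parabolic_allen_cahn} since $W'$ is smooth and $u_0 \in C^1$. The key input is that $\pm 1$ are themselves stationary solutions (because $W'(\pm 1) = 0$), while $\abs{u_0} \leq 1$; the parabolic comparison principle therefore sandwiches $-1 \leq u(t,\cdot) \leq 1$ on $[0, T_{\max})$. This uniform bound controls $W'(u)$, and standard parabolic Schauder estimates rule out finite-time blow-up, forcing $T_{\max} = \infty$.

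The second step is monotonicity in time. For $h > 0$ I set $v_h(t,x) = u(t+h,x) - u(t,x)$; by the mean value theorem, $v_h$ satisfies the linear parabolic equation $\partial_t v_h = \eps \Delta v_h - \eps^{-1} W''(\xi) v_h$ with $\xi$ between $u(t)$ and $u(t+h)$, hence with a bounded zeroth-order coefficient. It remains to check $v_h(0,\cdot) = u(h,\cdot) - u_0 \geq 0$ for small $h > 0$; this is where the weak subsolution hypothesis enters, via approximating $u_0$ by smooth subsolutions $u_0^k$ and applying the parabolic maximum principle to $u^k(t) - u_0^k$ (whose forcing term is the non-negative quantity $\eps \Delta u_0^k - \eps^{-1} W'(u_0^k)$), then passing to the limit $k \to \infty$. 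Once $v_h(0) \geq 0$ is in hand, the comparison principle propagates it to all $t \geq 0$.

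Third, I would extract a smooth limit: monotone convergence gives a pointwise limit $u_+$, upgraded to $C^k$-convergence via the uniform parabolic Schauder estimates already in play. To identify $u_+$ as an element of $\calZ_\eps$, I would invoke the gradient-flow identity $\frac{d}{dt} E_\eps(u(t)) = -\int_M u_t^2 \dvol$. Since $E_\eps \geq 0$ decreases along the flow, $\int_0^\infty \int_M u_t^2 \dvol \intdiff t$ is finite, so $u_t(t_j, \cdot) \to 0$ in $L^2$ along some sequence $t_j \to \infty$; combined with the smooth convergence this forces $\eps \Delta u_+ - \eps^{-1} W'(u_+) = 0$.

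The final step settles both minimality and stability. Any $v \in \calZ_\eps$ with $v \geq u_0$ is a stationary supersolution, so $u(t,\cdot) \leq v$ for all $t$ by parabolic comparison, and the limit gives $u_+ \leq v$; combined with $u_+ \in \calZ_\eps$ and $u_+ \geq u_0$, this identifies $u_+$ as the least element. For stability I would argue by contradiction: if $u_+$ were unstable, Perron--Frobenius produces a strictly positive first eigenfunction $\varphi > 0$ of the Jacobi operator $L_{\eps, u_+} = \Delta - \eps^{-2} W''(u_+)$, and a Taylor expansion shows that $u_+ - s\varphi$ is a strict supersolution of~\eqref{eq_allen_cahn} for small $s > 0$. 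Provided $u_+ > u_0$ strictly on $M$, I can arrange $u_+ - s\varphi \geq u_0$; running the parabolic flow (now monotone decreasing, by the analogue of this lemma for supersolutions) from $u_+ - s\varphi$ would produce a solution strictly between $u_0$ and $u_+$, contradicting minimality. I expect the main obstacle to be the degenerate case in which $u_+$ and $u_0$ agree at a point: by the strong maximum principle applied to $u_+ - u_0$, this forces $u_+ \equiv u_0$, reducing the stability assertion to the nontrivial claim that a weak subsolution that is itself a classical solution must already be stable -- a case that would need either separate treatment or a slight strengthening of the hypothesis.
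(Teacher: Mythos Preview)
Your approach matches the paper's almost exactly: parabolic comparison for existence and bounds, a linear equation for the time-derivative (the paper differentiates the equation to get a linear parabolic PDE for $u_t$ directly rather than working with finite differences $v_h$, and skips the energy identity in favour of Schauder estimates alone), barriers for minimality, and the first-eigenfunction supersolution trick for stability. The degenerate case you flag---where $u_0$ is itself a solution, so that $u_+ = u_0$ need not be stable---is a genuine gap in the lemma as stated, and the paper glosses over it too (tacitly assuming $u_+ > u_0$ strictly when shrinking $\theta$); it is harmless in the downstream applications, where $u_0$ is always taken to be a strict subsolution of the form $u_1 + \theta \varphi_1$.
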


\begin{proof}
Using the classical theory of parabolic PDE, we find that the solution
$u$ of~\eqref{eq_parabolic_allen_cahn} is unique, smooth and exists for all time.
To obtain the monotonicity of the flow, note that $u_t$ satisfies the following
parabolic equation: $\pd_t u_t = \eps \Delta u_t - \eps^{-1} W''(u) u_t$.
As initially $u_t(0,\cdot) \geq 0$, the parabolic maximum principle forces 
$u_t \geq 0$ for all $t \geq 0$. Moreover, this is strict unless $u_t = 0$,
which happens precisely if $u_0$ is a solution of~\eqref{eq_allen_cahn}.
Together with classical parabolic Schauder estimates, this monotonicity guarantees the
convergence of $u(t,\cdot)$ as $t \to \infty$, with limit the smooth function $u_+$.
Moreover $u_+$ is a classical solution of~\eqref{eq_allen_cahn}.
As the set $\{ v \in \calZ_\eps \mid v \geq u_0 \}$ works as a barrier for the flow,
the function $u_+$ must be its least element.

It remains to see the stability of $u_+$; there are various ways to obtain this.
For example, assume that $u_+$ is unstable and let $\varphi_1 > 0$ be its first
eigenfunction, with eigenvalue $\lambda_1 < 0$.
A quick computation reveals that for a suitably small $\theta \in (0,1)$, 
$u_+ - \theta \varphi_1 < u_+$ is a supersolution of~\eqref{eq_allen_cahn}.
Indeed
$\eps \Delta(u_+ - \theta \varphi_1) - \eps^{-1} W'(u_+ - \theta \varphi_1)
= \varphi_1 (\theta \lambda_1
- \eps^{-1} \int_0^\theta W^{'''}(u_+ - t \varphi_1) t \varphi_1 \intdiff t)$.
%
%
If we were to apply the arguments above, we would find that solving the
parabolic Allen--Cahn equation with initial datum $u_+ - \theta \varphi_1$
yields a strictly decreasing solution. 
Perhaps after adjusting $\theta$ to a smaller value so that
$u_+ - \theta \varphi_1 > u_0$, this acts as an upper barrier
and makes $u(t,\cdot) \to u_+$ absurd.
\end{proof}

We use this to prove Proposition~\ref{prop_frankel_allen_cahn}.

\begin{proof}
In a manifold with positive Ricci curvature, the only
stable solutions of~\eqref{eq_allen_cahn} are the constant functions with
values one of $ \{ -1,0,1 \}$.
This is because, as a quick computation shows,
$\delta^2 E_\eps(u)(\abs{\nabla u},\abs{\nabla u})
= - \eps \int_M \abs{\nabla^2 u}^2
- \abs{\nabla \abs{\nabla u}}^2 + \Ric(\nabla u, \nabla u)$.
This is non-negative precisely when $\abs{\nabla u} \equiv 0$ and $u$ is constant.

Let $u_1 \leq u_2$ be two distinct solutions of~\eqref{eq_allen_cahn},
and assume that $u_1$ is not constant equal $\pm 1$.
By the maximum principle  $u_1 < u_2$ on $M$.
(If $u_1 = 0$ then this would imply that $u_2$ is constant equal one.)
Let $\varphi_1 > 0$ be the first eigenfunction of the linearised operator $L_{\eps,u_1}$,
with eigenvalue $\lambda_1 < 0$. The same computation as in the proof of
Lemma~\ref{lem_flow_subsolution} shows that for sufficiently small $\theta \in (0,1)$,
$u_1 + \theta \varphi_1$ is a subsolution of~\eqref{eq_allen_cahn}.
Take $\theta > 0$ small enough that still $u_1 + \theta \varphi_1 < u_2$. 
Then solving~\eqref{eq_parabolic_allen_cahn} with this function as an initial
datum we obtain a strictly increasing family $u(t,\cdot)$ of functions with
$u_1 < u(t,\cdot) \leq u_2$. 
By Lemma~\ref{lem_flow_subsolution} this converges to a stable solution $u_+$
when we let $t \to \infty$. The characterisation of $u_+$ given there shows
that $u_+ \leq u_2$, while the fact that $\Ric > 0$ means that $u_+$ is constant
equal $1$. 
\end{proof}

\section{Symmetry of solutions}

\subsection{Stabilisers and weak convergence}
The Lie group $\SO(4)$ can be endowed with a bi-invariant metric, which induces
a distance function $d$.
Write $\calG$ for the set of closed subgroups of $\SO(4)$.
When endowed with the topology induced by the Hausdorff distance,
the couple $(\calG,d_H)$ forms a compact metric space.
Moreover, we have the following useful result.
(We state this for an arbitrary compact Lie group $G$, although for our
purposes $G = \SO(4)$ or $\SO(n+2)$ would be sufficient.
Moreover given a closed subgroup $H \subset G$, we let
$(H)_\tau = \{ P \in G \mid \dist(P,H) < \tau \}$ be 
its open tubular neighbourhood of size $\tau > 0$.)
\begin{lem}[\cite{MontgomeryZippinLieGroups}]
\label{lem_subgroups_conjugate}
Let $G$ be a compact Lie group, and $H$ be a closed subgroup of $G$.
There is $\tau > 0$ so that every subgroup $H' \in \calG$
with $H' \subset (H)_\tau$ is conjugate to a subgroup of $H$.
\end{lem}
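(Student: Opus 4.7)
The plan is to establish this via a center-of-mass argument on the homogeneous space $G/H$. I would equip $G$ with a bi-invariant Riemannian metric — available because $G$ is compact — and push it down to a $G$-invariant Riemannian metric on the coset space $G/H$, so that each closed subgroup $H' \subset G$ acts on $G/H$ by isometries via left multiplication.

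The first observation is that each $h' \in H'$ displaces the basepoint $eH \in G/H$ by exactly $\dist(h' H, H) \leq \dist(h', H)$, so if $H' \subset (H)_\tau$ then the entire orbit $H' \cdot eH$ is contained in the small metric ball $B_\tau(eH) \subset G/H$. Since $H'$ is closed in $G$ it is compact, and thus carries a Haar probability measure $\mu$, whose pushforward under the orbit map $h' \mapsto h' \cdot eH$ produces an $H'$-invariant probability measure $\nu$ supported in $B_\tau(eH)$.

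The key step is to apply a Karcher center-of-mass construction to $\nu$. For $\tau$ smaller than the convexity radius of $G/H$ the functional $yH \mapsto \int_{G/H} \dist(yH, zH)^2 \, d\nu(zH)$ admits a unique minimiser $xH$ lying in $B_\tau(eH)$. Because each $h' \in H'$ acts as an isometry of $G/H$ and preserves $\nu$, the functional is $H'$-invariant, and hence so is its unique minimiser. Thus $h' \cdot xH = xH$ for every $h' \in H'$, which rearranges to $x^{-1} H' x \subset H$, giving the desired conjugation.

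The main obstacle is securing a uniform $\tau$ for which the Karcher mean is well-defined, \emph{i.e.}, for which balls of radius $\tau$ in $G/H$ are strictly geodesically convex (one needs $\tau$ less than half the injectivity radius and less than $\pi/(2 \sqrt{\kappa})$ where $\kappa$ is an upper sectional curvature bound). This is precisely where compactness of $G/H$ is indispensable: both the injectivity radius and the curvature are controlled, so the convexity radius is bounded below by a positive constant, and any smaller $\tau$ suffices.
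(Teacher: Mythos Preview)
Your argument is correct and is in fact a standard modern route to this result. However, note that the paper does not prove this lemma at all: it is stated with a citation to Montgomery--Zippin and used as a black box. There is therefore nothing in the paper to compare your proof against.

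A couple of minor points worth tightening. First, the inequality $d_{G/H}(h' \cdot eH, eH) \leq \dist_G(h', H)$ relies on the projection $G \to G/H$ being a Riemannian submersion, which in turn uses that the bi-invariant metric makes right $H$-translations isometries; you might say this explicitly. Second, for the Karcher mean of a measure supported in $B_\tau(eH)$ to be unique and to remain in that ball, one typically needs the ball of radius $2\tau$ (not just $\tau$) to be geodesically convex, since the functional is evaluated at points $yH$ that may lie anywhere in $B_\tau(eH)$ and must see all of $\operatorname{supp}\nu$ inside a convex set; this only affects the constant and is harmless, but it is worth stating the hypothesis precisely.
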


Let $\eps_j \to 0$ be a sequence of positive scalars and $(u_j \mid j \in \bN)$
be a sequence of solutions of~\eqref{eq_allen_cahn}, with respectively 
$u_j \in \calZ_{\eps_j}$.
We assume that they additionally have uniformly bounded Allen--Cahn energy and
$\indx u_{\eps_j} = 5$ for all $j$, whence $V(\eps_j,u_j) \to \abs{T}$ as
$j \to \infty$.
We abbreviate 
\begin{equation}
	G_j = \stab u_j \text{ and } G_T = \stab T. 
\end{equation}
Moreover, we may assume throughout that $j$ is large enough that%
~\eqref{eq_nullity_lower_bound} holds, ensuring that $\nullity u_j \leq 4$.
We use Lemma~\ref{lem_subgroups_conjugate} to show that eventually
the stabilisers $G_j$ and $G_T$ are conjugate subgroups of $\SO(4)$.

First we point that out that $\SO(4)$ defines a natural action on the space of 
two-varifolds $\VAR_2(\bS^3)$. A rotation $P \in \SO(4)$ maps
$V \in \VAR_2(\bS^3)$ to its push-forward $P_{\#} V$.
(Here we implicitly identify $P \in \SO(4)$ with the isometry it induces on $\bS^3$,
as we have been doing.)
This action is continuous in the varifold topology.
To see this, let $(P,V) \in \SO(4) \times \VAR_2(\bS^3)$ be arbitrary,
and consider two sequences $P_k \to P$ and $V_k \to V$,
the latter being in the varifold topology.
Given $\varphi \in C(\Gr_2(\bS^3))$, one has
$(P_{k\#} V_k - P_{\#} V) \varphi =
P_{k\#} (V_k - V) \varphi + (P_{k\#} V - P_{\#} V) \varphi$, both of which
tend to zero as $k \to \infty$.
We define the stabiliser and orbit of $V$ in the usual way.
For an embedded surface $\Sigma \subset \bS^3$ and all $P \in \SO(4)$,
it holds that $P_{\#} \abs{\Sigma} = \abs{P(\Sigma)}$ and we need not distinguish
between the stabiliser and orbit of $\Sigma$ as an embedded surface or as a varifold.

Let $u \in \calZ_\eps$, and the varifold $V(\eps,u)$ be defined
as in~\eqref{eq_varifold_definition}. Via a simple change of variable we find
\begin{equation}
	\label{eq_orthogonal_action_veps}
	P_{\#} V(\eps,u) = V(\eps,u \circ P^{-1})
	\text{ for all $P \in \SO(4)$.}
\end{equation}
To see this, let $X \in \bS^3$ be arbitrary and $Y = P(X)$.
Assume without loss of generality that
$\abs{\nabla u}(X) = \abs{\nabla (u \circ P^{-1})}(Y) > 0$,
ensuring that the level sets $\{ u = u(X) \}$
and $\{ u \circ P^{-1} = (u \circ P^{-1})(Y) \}$ 
are regular near $X$ and $Y$ respectively.
The map $P$ induces on $\Gr_2(\bS^3)$ sends $(X,T_X \{ u = u(X) \})$
to $(Y,T_Y \{ u \circ P^{-1} = (u \circ P^{-1})(Y) \}$.
Given any $\varphi \in C(\Gr_2(\bS^3))$, we may thus compute
$[P_{\#} V(\eps,u)] (\varphi) = V(\eps,u)(\varphi \circ P)$ to be
equal $V(\eps,u \circ P^{-1})(\varphi)$:
\begin{multline}
	\frac{1}{2\sigma}\int_{\bS^{3}} \eps \abs{\nabla u}^2(X)
	(\varphi \circ P)(X,T_X \{ u = u(X) \}) \intdiff \calH^3(X) \\
	= \frac{1}{2 \sigma} \int_{\bS^3} \eps \abs{\nabla (u \circ P^{-1})}^2(Y)
	\varphi(Y,T_Y \{ u \circ P^{-1} = (u \circ P^{-1})(Y) \} \intdiff \calH^3(Y).
\end{multline}

Although the compared actions are on the right and left respectively,%
it follows from~\eqref{eq_orthogonal_action_veps} that $\stab u \subset \stab V(\eps,u)$,
and specialised to our sequence $G_j \subset \stab V(\eps_j,u_j)$ for all $j$.

\begin{lem}
\label{lem_stabiliser_conjugate}
Let $\eps_j \to 0$ and $u_j \in \calZ_{\eps_j}$ be as described above,
with $V(\eps_j,u_j) \to \abs{T}$ as $j \to \infty$.
For large $j$, $G_j$ is conjugate to a subgroup of
$G_T = \SO(2) \times \SO(2)$.
\end{lem}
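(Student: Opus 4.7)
The plan is to apply Lemma~\ref{lem_subgroups_conjugate} with $G = \SO(4)$ and $H = G_T$. Let $\tau > 0$ be the tolerance furnished by that lemma. It then suffices to establish that $G_j \subset (G_T)_\tau$ for all sufficiently large $j$, after which each $G_j$ is automatically conjugate to a subgroup of $G_T = \SO(2) \times \SO(2)$.

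To prove the inclusion, I would argue by contradiction. Suppose that for arbitrarily large $j$ there is an element $P_j \in G_j$ with $\dist(P_j, G_T) \geq \tau$. Extracting a subsequence (not relabelled) and using compactness of $\SO(4)$, I may assume $P_j \to P_\infty$, where necessarily $\dist(P_\infty, G_T) \geq \tau$ and in particular $P_\infty \notin G_T$.

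Now invoke the fact, established just before the lemma, that $G_j = \stab u_j \subset \stab V(\eps_j, u_j)$, so $P_{j \#} V(\eps_j, u_j) = V(\eps_j, u_j)$ for every $j$. Passing to the limit using the joint continuity of the action $(P, V) \mapsto P_{\#} V$ on $\SO(4) \times \VAR_2(\bS^3)$ together with $V(\eps_j, u_j) \to \abs{T}$, I obtain $P_{\infty \#} \abs{T} = \abs{T}$. Because $\abs{T}$ is the unit-density varifold of a smooth embedded surface and $P_{\#} \abs{T} = \abs{P(T)}$, this identity forces $P(T) = T$, i.e.\ $P_\infty \in \stab T = G_T$, contradicting the choice of the $P_j$.

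The argument is essentially a soft compactness/continuity argument, so no serious obstacle is anticipated; the only point requiring a little care is that the convergence $V_j \to \abs{T}$ together with $P_j \to P_\infty$ really does yield $P_{j\#} V_j \to P_{\infty \#} \abs{T}$, which is exactly the joint continuity statement recorded just above via the splitting $P_{k\#} V_k - P_\# V = P_{k\#}(V_k - V) + (P_{k\#} V - P_\# V)$.
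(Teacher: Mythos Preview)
Your argument is correct and follows essentially the same route as the paper: both use the inclusion $G_j \subset \stab V(\eps_j,u_j)$, the joint continuity of the $\SO(4)$-action on varifolds, and a compactness/extraction argument to force $G_j \subset (G_T)_\tau$ for large $j$, then conclude via Lemma~\ref{lem_subgroups_conjugate}. The only cosmetic difference is that you phrase the extraction step directly as a proof by contradiction, whereas the paper first shows every subsequential limit of elements of $G_j$ lies in $G_T$ and then invokes a second extraction; these are the same argument.
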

\begin{proof}
Consider a sequence $(P_j \mid j \in \bN)$ with $P_j \in \stab u_j$.
Upon extracting a subsequence we may assume that it converges to some $P \in \SO(4)$.
As $V(\eps_j,u_j) \to \abs{T}$ we get on the one hand
$P_{j\#} V(\eps_j,u_j) \to P_{\#} \abs{T}$.
On the other hand $P_{j\#} V(\eps_j,u_j) = V(\eps_j,u_j \circ P_j^{-1}) = V(\eps_j,u_j)$
by construction, so $P_{\#} \abs{T} = \abs{T}$ and $P \in G_T = \stab T$.
Via another extraction argument, we find that given any $\tau > 0$
there is $J(\tau) \in \bN$ so that $G_j \subset (G_T)_\tau$ when $j \geq J(\tau)$.
By Lemma~\ref{lem_subgroups_conjugate}, $G_j$ is conjugate to a subgroup of $G_T$.
\end{proof}

Therefore eventually $\dim \stab u_j \leq 2$ and $\nu(u_j) \geq 4$.
Combining this with~\eqref{eq_nullity_lower_bound}, we find that for large $j$
\begin{equation}
	\label{eq_nullity_killing_nullity_eventually_coincide}
	\nu(u_j) = \nullity u_j = 4.
\end{equation}
Thus $G_j$ is conjugate to $G_T$; further given any $\tau > 0$ there is $J(\tau) \in \bN$
so that  for $j \geq J(\tau)$, there is $P_j \in \SO(4)$ with $d(P_j,I) < \tau$ and
$P_j^{-1} G_j P_j = G_T$.

\subsection{Proof of Theorem~\ref{thm_main_result_clifford}}

Applying the results of~\cite{BrezisOswaldSublinearPDE} in the present setting,
one obtains the following; see also~\cite{CajuGasparGuaracoMatthiesen} for
an alternative construction of the symmetric critical point.
\begin{lem}
\label{lem_unique_symmetric_solution}
There is $\eps_0 = \eps_0(T) > 0$ so that for all $0 < \eps < \eps_0$
there is a unique function $u_{T,\eps} = u \in \calZ_\eps$ with
$\{ u = 0 \} = T$, up to change of sign. Moreover $u_{T,\eps}$ is invariant under $G_T$.
\end{lem}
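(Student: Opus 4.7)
The plan is to apply the Brezis--Oswald uniqueness theorem~\cite{BrezisOswaldSublinearPDE} for sublinear elliptic PDE on each of the two solid tori $\Omega_\pm \subset \bS^3$ bounded by $T$, and to paste the resulting one-sided solutions into a single smooth $G_T$-invariant solution on $\bS^3$.

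Rewrite~\eqref{eq_allen_cahn} as $-\Delta u = \eps^{-2}(u - u^3)$, and restrict to $\Omega_+$ with Dirichlet boundary data $u|_T = 0$. The nonlinearity $f(t) = \eps^{-2}(t - t^3)$ vanishes at $t = 0$ and has $f(t)/t = \eps^{-2}(1 - t^2)$ strictly decreasing on $(0, \infty)$, matching the hypotheses of Brezis--Oswald. Their theorem produces a unique positive classical solution $u_\eps^+$ on $\Omega_+$ provided $\lambda_1(-\Delta, \Omega_+) < \eps^{-2}$, which holds once $\eps < \eps_0(T)$. Since every $P \in G_T$ preserves both $\Omega_+$ and $T$, the pullback $u_\eps^+ \circ P$ solves the same Dirichlet problem, so by uniqueness $u_\eps^+ \circ P = u_\eps^+$, and $u_\eps^+$ is automatically $G_T$-invariant.

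To extend $u_\eps^+$ to a smooth solution on all of $\bS^3$, I pass to the Hopf parametrization $(\psi, \theta_1, \theta_2) \in [0, \pi/2] \times [0, 2\pi)^2$, in which $T$ corresponds to $\{\psi = \pi/4\}$ and $G_T$-invariant functions become functions of $\psi$ alone. Writing $u_\eps^+ = g(\psi)$ on $[\pi/4, \pi/2]$, the PDE reduces to a second-order ODE whose coefficients and right-hand side are both odd under the involution $(\psi, g) \mapsto (\pi/2 - \psi, -g)$---the latter invariance using crucially that $W$ is even. Extending $g$ by $g(\psi) := -g(\pi/2 - \psi)$ for $\psi \in [0, \pi/4]$ therefore produces a function solving the ODE away from $\pi/4$; at $\pi/4$ one has $C^1$-matching directly from $g(\pi/4) = 0$, and the ODE itself bootstraps this to $g \in C^\infty([0, \pi/2])$. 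The boundary values $g'(0) = g'(\pi/2) = 0$ needed for smoothness at the two collapsed orbit circles are inherited from the smoothness of $u_\eps^+$ on $\overline{\Omega_+}$, so the associated function $u_{T,\eps} = g \circ \psi$ is a smooth $G_T$-invariant solution of~\eqref{eq_allen_cahn} on $\bS^3$ with $\{u_{T,\eps} = 0\} = T$.

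Uniqueness up to sign follows from a sign-pattern analysis. Let $v \in \calZ_\eps$ satisfy $\{v = 0\} = T$; by continuity $v$ has constant sign on each $\Omega_\pm$. If both signs agreed, then $T$ would be a set of interior extrema of $v$, forcing $\nabla v \equiv 0$ on $T$; but $v|_{\Omega_+}$ would (up to a global sign flip) be the Brezis--Oswald solution $u_\eps^+$, for which the Hopf boundary point lemma yields $\partial_\nu u_\eps^+ \neq 0$ on $T$, a contradiction. Hence $v$ switches sign across $T$, and Brezis--Oswald uniqueness on each of $\Omega_\pm$ identifies $v$ with $\pm u_{T,\eps}$. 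The main obstacle throughout is verifying smoothness across $T$; this is resolved cleanly by the $G_T$-reduction to an ODE combined with the evenness of $W$, after which all remaining steps are direct applications of Brezis--Oswald and the Hopf lemma.
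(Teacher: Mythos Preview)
Your proof is correct and follows essentially the same strategy as the paper's: apply Brezis--Oswald on each solid torus bounded by $T$ to get a unique positive $G_T$-invariant solution, then glue the two pieces via the reflection symmetry $\psi \mapsto \pi/2 - \psi$ (which the paper expresses more directly as the isometric involution $(X,Y)\mapsto(Y,X)$ of $\bS^3$, without first reducing to an ODE). Your explicit Hopf-lemma argument ruling out the same-sign case in the uniqueness step is a point the paper glosses over, so in that respect your write-up is slightly more complete.
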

\begin{proof}
It is convenient to write $\bS^3 = \{ (X,Y) \in \bR^2 \times \bR^2 \mid
\abs{X}^2 +\abs{Y}^2 = 1 \}$. The complement of $T$ has two connected components $U_{\pm}$,
and we write $U_+ = \{ (X,Y) \in \bS^3 \mid \abs{X} < \abs{Y} \}$
and $U_{-} = \{ (X,Y) \in \bS^3 \mid \abs{X} > \abs{Y} \}$.
These two regions are isometric via the involution $(X,Y) \in \bS^3 \mapsto (Y,X)$.
Let $U = U_{\pm}$ and $\lambda_1(\Delta;U)$ be the first eigenvalue
of the Laplacian on $U$ with Dirichlet eigenvalues.
By~\cite{BrezisOswaldSublinearPDE}, provided
\begin{equation}
	0 < \eps <  \{ W''(0)/ \lambda_1(\Delta;U) \}^{1/2},
\end{equation}
there is a unique solution $u_{\pm}$ on $U_{\pm}$ respectively of the system
\begin{equation}
\begin{cases}
	\eps \Delta u - \eps^{-1} W'(u) = 0  &\text{in $U$}, \\
	u > 0 &\text{in $U$}, \\
	u = 0 &\text{on $\bdary U$}.
\end{cases}
\end{equation}
As this system is invariant under the action of $G_T$, the solutions $u_{\pm}$
must further be $G_T$-invariant.
Moreover they are symmetric under the involution above, that is $u_+(X,Y) = u_{-}(Y,X)$
for all $(X,Y) \in \bS^3$ with $\abs{X} < \abs{Y}$.
By elliptic regularity $u_{\pm}$ are respectively smooth up to and including
the boundary of $U_{\pm}$.
We define the function $u$ by setting
\begin{equation}
\label{eq_defn_fn_u}
u(X,Y) 	= 
\begin{cases}
	u_+(X,Y) &\text{if $\abs{X} \leq \abs{Y}$,} \\
	-u_-(X,Y) &\text{if $\abs{X} > \abs{Y}$}.
\end{cases}
\end{equation}
By the symmetry of $u_\pm$ under the involution, this function is smooth
and solves~\eqref{eq_allen_cahn} on $\bS^3$.
Thus there exists at least one solution of~\eqref{eq_allen_cahn} which vanishes
precisely on $T$. 
Conversely, if an arbitrary solution $v \in \calZ_\eps$ had $Z(v_{\eps}) = T$ then
its restrictions to ${U_{\pm}}$ would both have a sign, and thus $v$ is
forced to coincide with $u$ up to change of sign.
\end{proof}

We conclude with a proof of the main result: Theorem~\ref{thm_main_result_clifford}.

\begin{proof}
Let $u_\eps \in \calZ_\eps$ be a solution with $E_\eps(u_\eps) \leq C$ and 
$\indx u_\eps = 5$. Let a small $\tau > 0$ be given, and $\eps> 0$ be small 
enough in terms of $\tau,D$ that~\eqref{eq_nullity_killing_nullity_eventually_coincide}
holds and there is $P_{\eps} \in \SO(4)$ with $d(P_\eps,I) < \tau$
and $P_{\eps}^{-1} G_\eps P_\eps = G_T$, where $G_\eps = \stab u_\eps$.
As the nodal set $Z(u_\eps) = \{ u_\eps = 0 \}$ converges to $T$ with
respect to Hausdorff distance we may moreover assume that $Z(u_\eps) \subset (T)_\tau$.
Let $v_\eps = u_\eps \circ P_\eps \in \calZ_\eps$; the energy, index and nullities
are unaffected by this operation and $v_\eps$ is stabilised by $G_T$.
Therefore its nodal set is union of orbits of $G_T$.
Appealing to~\cite{ChodoshMantoulidis2018} for example, it must be connected and
so be of the form $Z(v_\eps) = \{ \dist(\cdot,T) = \delta \}$
for some $\delta$ which tends to zero as $\tau,\eps \to 0$.
Let $u_{T,\eps}$ be the symmetric solution around the Clifford torus from
Lemma~\ref{lem_unique_symmetric_solution}. The Frankel-type property of
Corollary~\ref{cor_frankel_one_connected} forces $\delta = 0$ and
$Z(v_\eps) = Z(u_{T,\eps})$.
(In fact here both nodal sets are connected and separating.)
By Lemma~\ref{lem_unique_symmetric_solution} up to a change of sign $v_\eps = u_{T,\eps}$,
which concludes the proof.
\end{proof}

\appendix

\section{Modifications in higher dimensions}

\label{app_modifications}

The only result that needs to be slightly altered in higher dimensions,
when working with the hypersurface $T_{p,q} \subset \bS^{n+1}$,
is Lemma~\ref{lem_unique_symmetric_solution}.
Even then, in case $p = q$ the statement and its proof remain valid with no
changes. However the less symmetric case where $p \neq q$ calls for a more
complicated construction; Caju--Gaspar~\cite{CajuGasparAllenCahnWithSymmetry}
prove the following. (Their result is valid more broadly;
we give a modified statement specialised to the present context.)
\begin{lem}[{\cite[Thm.\ 1.1]{CajuGasparAllenCahnWithSymmetry}}]
Let $p,q > 0$ and $n = p + q \geq 3$. Given any $\delta > 0$ there is $\eps_0 > 0$
so that for all $0 < \eps < \eps_0$ there is a solution $u_{p,q,\eps} \in \calZ_{\eps}$
on $\bS^{n+1}$ with $Z(u_{p,q,\eps}) \subset (T_{p,q})_\delta$ and
$(1-\delta) \calH^n(T_{p,q}) \leq E_\eps(u_{p,q,\eps}) \leq (1 + \delta)\calH^n(T_{p,q})$.
\end{lem}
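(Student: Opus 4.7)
The plan is to carry out an equivariant mountain-pass construction, restricting the Allen--Cahn functional to functions invariant under $G = \SO(p+1) \times \SO(q+1)$. This group acts on $\bS^{n+1} = \{(X,Y) \in \bR^{p+1} \times \bR^{q+1} : \abs{X}^2 + \abs{Y}^2 = 1\}$ with cohomogeneity one, the principal orbits being $\bS^p(\cos\theta) \times \bS^q(\sin\theta)$ for $\theta = \arctan(\abs{Y}/\abs{X}) \in (0,\pi/2)$. Any $G$-invariant function is described by a profile $f(\theta)$ on $[0,\pi/2]$, and $E_\eps$ reduces to the one-dimensional weighted energy
\begin{equation}
J_\eps(f) = \omega_p \omega_q \int_0^{\pi/2} \Bigl( \frac{\eps}{2}(f')^2 + \eps^{-1}W(f) \Bigr) \cos^p\theta \sin^q\theta \intdiff\theta,
\end{equation}
whose critical points yield $G$-invariant elements of $\calZ_\eps$ by the principle of symmetric criticality. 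A direct computation shows that the mean curvature of the principal orbit at $\theta$ vanishes precisely when $p \sin^2\theta = q\cos^2\theta$, so $T_{p,q}$ is the unique $G$-invariant minimal hypersurface in $\bS^{n+1}$, corresponding to $\theta_0 = \arctan\sqrt{q/p}$.

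Next I would apply the mountain pass theorem to $J_\eps$ between the stable minima $f \equiv \pm 1$. The Palais--Smale condition is standard for Allen--Cahn on a compact domain and yields a critical point $f_\eps$ at the level $c_\eps = \inf_\gamma \max_t J_\eps(\gamma(t))$. To bound $c_\eps$ from above, construct a comparison family by sliding the one-dimensional optimal transition profile of scale $\eps$ through $(0,\pi/2)$: for $s$ in the interior take $\gamma_s(\theta) = H_\eps(\theta - s)$, interpolated continuously to $\pm 1$ near the endpoints $s = 0, \pi/2$. Concentration of the transition layer together with the equipartition of energy yield $J_\eps(\gamma_s) = (1 + o(1))\, \omega_p \omega_q \cos^p s \sin^q s$ as $\eps \to 0$, under the normalisation of the lemma. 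The weight $\cos^p s \sin^q s$ is maximised exactly at $s = \theta_0$, with maximum value $(p/n)^{p/2}(q/n)^{q/2}$; multiplying by $\omega_p \omega_q$ recovers $\calH^n(T_{p,q})$. Therefore $c_\eps \leq (1 + o(1))\calH^n(T_{p,q})$.

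To pass to the limit, use Hutchinson--Tonegawa to extract a subsequential varifold limit $V(\eps,u_\eps) \to V$, a stationary integral varifold inheriting the $G$-invariance of the sequence. Its support is thus a $G$-invariant minimal hypersurface, necessarily $T_{p,q}$, so $V = Q\abs{T_{p,q}}$ for some $Q \in \bZ_{>0}$. The energy asymptotic combined with the upper bound on $c_\eps$ forces $Q = 1$, yielding multiplicity-one convergence without appealing to~\cite{ChodoshMantoulidis2018}. The matching lower bound $E_\eps(u_\eps) \geq (1 - \delta)\calH^n(T_{p,q})$ then follows by lower semicontinuity, and the Hausdorff convergence $Z(u_\eps) \to T_{p,q}$ gives $Z(u_\eps) \subset (T_{p,q})_\delta$ for $\eps$ small.

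The main technical obstacle is the continuous interpolation of the comparison family near the endpoints $s = 0, \pi/2$: as $s$ approaches either extremum the transition profile collides with the singular orbit and must be deformed into the constant $\pm 1$; the vanishing of the weight $\cos^p s \sin^q s$ at these endpoints is what allows such interpolation to contribute only a $o(1)$ error to $J_\eps$. A secondary concern is the loss of the $\bZ/2$-swap symmetry when $p \neq q$, which prevents a direct Brezis--Oswald approach as in Lemma~\ref{lem_unique_symmetric_solution}; here the identification of the limit relies crucially on the uniqueness of the $G$-invariant minimal hypersurface.
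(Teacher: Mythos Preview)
The paper does not give its own proof of this lemma: it is quoted directly from \cite{CajuGasparAllenCahnWithSymmetry} as a black box, so there is no in-paper argument to compare against. Your sketch---equivariant reduction under $G=\SO(p+1)\times\SO(q+1)$ to a one-dimensional weighted functional via Palais' principle, a mountain pass between the constants $\pm 1$, an upper bound via sliding heteroclinic profiles whose maximum is attained at the unique minimal orbit $\theta_0$, and identification of the limit varifold as $Q\abs{T_{p,q}}$ by $G$-invariance followed by $Q=1$ from the energy upper bound---is a correct outline and is precisely the strategy of the cited work.

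One point worth making explicit: to conclude $Q\geq 1$ (equivalently, that the limit varifold is nonzero) you need a uniform positive lower bound on $c_\eps$ as $\eps\to 0$. This is not automatic from the mountain-pass geometry alone, since the barrier height near the wells degenerates with $\eps$; it follows instead from the Hutchinson--Tonegawa density estimates applied at any point of $\{u_\eps=0\}$, or from the lower bounds for the phase-transition spectrum in \cite{GasparGuaraco2016}. The endpoint interpolation you flag is indeed only a nuisance: the vanishing of the weight $\cos^p\theta\sin^q\theta$ at $\theta=0,\pi/2$ absorbs the cutoff error exactly as you describe.
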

Applying~\cite{BrezisOswaldSublinearPDE} here gives that any
other solution $v_\eps$ of~\eqref{eq_allen_cahn} on $\bS^{n+1}$ with
$Z(v_{\eps}) = Z(u_{p,q,\eps})$ must coincide with $u_{p,q,\eps}$ up to 
a possible change of sign.
There are two ways to justify this here. The first is via the results of~%
~\cite{ChodoshMantoulidis2018}, which show that for $\eps > 0$ small enough
the nodal set of the $u_{p,q,\eps}$ converge smoothly to the limit surface $T_{p,q}$.
The uniqueness is then a direct consequence of~\cite{BrezisOswaldSublinearPDE},
applied in the two regions making up $\bS^{n+1} \setminus Z(u_{p,q,\eps})$.
For an argument that does not rely open the convergence of the nodal sets,
one may combine~\cite{HardtSimon89} with~\cite{BrezisOswaldSublinearPDE}
to obtain the following general lemma.
\begin{lem}
\label{lem_nodal_set_allen_cahn}
Let $(M,g)$ be closed. Let $\eps > 0$ and $u_\eps^1,u_\eps^2$ be two solutions
of~\eqref{eq_allen_cahn} on $M$. If $Z(u_{\eps}^1) = Z(u_{\eps}^2)$,
then $u_\eps^1 = \pm u_\eps^2$.
\end{lem}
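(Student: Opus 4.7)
The plan is to study the pointwise ratio $\sigma = u_\eps^1/u_\eps^2$ on $M \setminus Z$: first to show via Brezis--Oswald that $\abs{\sigma} \equiv 1$ on each connected component, then to extend $\sigma$ continuously across the regular part of $Z$ using Hardt--Simon, and finally to conclude by a codimension argument. The degenerate case $Z = \emptyset$ is immediate: on the closed manifold $M$ the maximum principle forces both $u_\eps^i$ to be constants in $\{-1,0,1\}$, and the claim is automatic.

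Assuming $Z \neq \emptyset$, I would invoke the nodal-set regularity theorem of Hardt--Simon~\cite{HardtSimon89} to decompose $Z = \reg Z \cup \sing Z$, where $\reg Z$ is the set of non-critical points of $u_\eps^1$ on $Z$ (a smooth hypersurface of $M$) and $\sing Z$ has Hausdorff dimension at most $\dim M - 2$. The key technical step is to verify that $\reg Z$ is regular for $u_\eps^2$ as well. At any $p \in \reg Z$, $u_\eps^1$ changes sign across $Z$; if $u_\eps^2$ did not, it would have constant sign on a neighbourhood of $p$ with an extremum on $Z$. Rewriting $\eps \Delta u_\eps^2 - \eps^{-1} W'(u_\eps^2) = 0$ as $\eps \Delta u_\eps^2 = \eps^{-1} W''(\theta) u_\eps^2$ via $W'(0) = 0$ and the mean-value theorem, the Hopf boundary point lemma applied on each side of the hypersurface $Z$ would yield incompatible signs for the normal derivative of $u_\eps^2$ along $Z$, a contradiction. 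Hence $\nabla u_\eps^2(p) \neq 0$ and $\reg Z$ is regular for both solutions.

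On each connected component $U$ of $M \setminus Z$ the functions $u_\eps^1$ and $u_\eps^2$ have strict sign, so $\abs{u_\eps^i}$ are positive classical solutions of the same Dirichlet problem on $U$. Since $u \mapsto -\eps^{-2} W'(u)/u$ is strictly decreasing for $u > 0$, the Brezis--Oswald uniqueness theorem~\cite{BrezisOswaldSublinearPDE} gives $\abs{u_\eps^1} = \abs{u_\eps^2}$ on $U$, hence $u_\eps^1 = \sigma_U u_\eps^2$ on $U$ for some sign $\sigma_U \in \{-1,+1\}$. To see that $\sigma_U$ is independent of $U$, I would observe that in local coordinates near $p \in \reg Z$ with $Z = \{x_n = 0\}$, each $u_\eps^i$ factors as $u_\eps^i(x',x_n) = x_n \, h_i(x',x_n)$ with $h_i$ smooth and $h_i(p) \neq 0$, so $\sigma = h_1/h_2$ extends smoothly across $\reg Z$. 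The extension is $\{\pm 1\}$-valued and locally constant on $M \setminus \sing Z$; since $\sing Z$ has codimension at least $2$, this open set is connected, forcing $\sigma \equiv \pm 1$ globally and yielding $u_\eps^1 = \pm u_\eps^2$. The principal obstacle is the synchronisation step via Hopf's lemma in paragraph two, which is what allows a single Hardt--Simon stratification to serve both functions; the remainder assembles standard ingredients.
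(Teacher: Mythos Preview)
Your proof is correct and follows essentially the same strategy as the paper: both use the Hardt--Simon stratification of the common nodal set together with Brezis--Oswald uniqueness on each nodal domain, then synchronise the signs across the regular part of $Z$ via a Hopf-type argument. The only difference is organisational: where you extend the ratio $\sigma = u_\eps^1/u_\eps^2$ continuously across $\reg Z$ and invoke connectedness of $M \setminus \sing Z$, the paper instead joins any two nodal domains by a path perturbed off the singular set (citing~\cite{SimonWickramasekera16}) and propagates the sign through the resulting chain of adjacent regions using~\cite[Lem.~1]{BrezisOswaldSublinearPDE}.
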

\begin{proof}
Write $Z = Z(u_\eps^1) = Z(u_\eps^2)$. The cases where $Z = \emptyset$ or
$Z = M$ are trivial, and we leave them aside. Divide the complement of $Z$ into
its connected components, say $M \setminus Z = \cup_{j = 0}^N U_j$, where
$N \in \bZ_{\geq 0} \cup \{ \infty \}$.
The two functions $u_\eps^1,u_\eps^2$ have a sign in the interior of each $U_j$,
and vanish on $\bdary U_j$. By~\cite{BrezisOswaldSublinearPDE} they are equal
up to a change of  sign; however this needs to be chosen consistently across all
regions of $M \setminus Z$.
Call two regions $U_j,U_k$ \emph{adjacent} if $\calH^{n-1}(\bdary U_j \cap \bdary U_j)
\neq 0$. By~\cite{HardtSimon89} the nodal set may be decomposed like
$Z = R \cup S$, where $R$ is the set of regular points, that is those $X \in Z$
so that for some $\rho  > 0$, $B_\rho(X) \cap Z$ is a $C^1$ $(n-1)$-dimensional
submanifold, and $S$ is a countably $(n-2)$--rectifiable set.
If $U_j,U_k$ are adjacent then $\bdary U_j \cap \bdary U_k$ contains a regular point
$X$ say, and there is $\rho > 0$ so that $\bdary U_j \cap \bdary U_k \cap B_\rho(X)
= R \cap B_\rho(X)$.
The regularity of the boundary near $X$ allows the application of%
~\cite[Lem.\ 1]{BrezisOswaldSublinearPDE} to deduce that
$\frac{\partial u_1}{\partial \nu}, \frac{\partial u_2}{\partial \nu} \neq 0$
on $\bdary U_j \cap \bdary U_k \cap B_\rho(X)$.
It follows that the respective signs of $u_1,u_2$ on $U_j$ determines their 
signs on $U_k$, and vice-versa.
Now let $U_j,U_k$ be two connected components, which are not necessarily adjacent.
There is a path $\gamma: [0,1] \to M$ with endpoints
$\gamma(0) \in U_j$ and $\gamma(1) \in U_k$. 
Using a perturbation analogous to that used in~\cite[Lem.\ A.1]{SimonWickramasekera16}
one may arrange for $\gamma([0,1]) \cap S = \emptyset$. The curve $\gamma$ runs
through finitely many regions of $M \setminus Z$. List them as $U_{j_1} = U_j,
U_{j_2},\dots,U_{j_D} = U_k$, which are pairwise adjacent in this order.
Therefore the sign of $u_1,u_2$ on $U_j$ determines their sign on $U_k$ and
vice-versa; this concludes the proof.
\end{proof}

For the remaining steps in the proof of Theorem~\ref{thm_clifford_p_q} one may thus
follow the arguments we used for Theorem~\ref{thm_main_result_clifford}.

\bibliographystyle{alpha}

\bibliography{min_surfaces_refs.bib}

\begin{thebibliography}{CGGM20}

\bibitem[AA81]{AllardAlmgren_Uniqueness_Radial_Behaviour}
William~K. Allard and Frederick~J. Almgren, Jr.
\newblock On the radial behavior of minimal surfaces and the uniqueness of
  their tangent cones.
\newblock {\em Ann. of Math. (2)}, 113(2):215--265, 1981.

\bibitem[BO86]{BrezisOswaldSublinearPDE}
Ha\"{\i}m Brezis and Luc Oswald.
\newblock Remarks on sublinear elliptic equations.
\newblock {\em Nonlinear Anal.}, 10(1):55--64, 1986.

\bibitem[CG19]{CajuGasparAllenCahnWithSymmetry}
Rayssa Caju and Pedro Gaspar.
\newblock Solutions of the {A}llen--{C}ahn equation on closed manifolds in the
  presence of symmetry.
\newblock Preprint, 2019.
\newblock \url{https://arxiv.org/abs/1906.05938}.

\bibitem[CGGM20]{CajuGasparGuaracoMatthiesen}
Rayssa Caju, Pedro Gaspar, Marco Guaraco, and Henrik Matthiesen.
\newblock Ground states of semilinear elliptic equations.
\newblock Preprint, 2020.
\newblock \url{https://arxiv.org/abs/2006.10607}.

\bibitem[CM20]{ChodoshMantoulidis2018}
Otis Chodosh and Christos Mantoulidis.
\newblock Minimal surfaces and the {A}llen-{C}ahn equation on 3-manifolds:
  index, multiplicity, and curvature estimates.
\newblock {\em Ann. of Math. (2)}, 191(1):213--328, 2020.

\bibitem[Fra61]{FrankelPositiveCurvature}
Theodore Frankel.
\newblock Manifolds with positive curvature.
\newblock {\em Pacific J. Math.}, 11:165--174, 1961.

\bibitem[Gas20]{Gaspar2017}
Pedro Gaspar.
\newblock The second inner variation of energy and the {M}orse index of limit
  interfaces.
\newblock {\em J. Geom. Anal.}, 30(1):69--85, 2020.

\bibitem[GG18]{GasparGuaraco2016}
Pedro Gaspar and Marco Guaraco.
\newblock The {A}llen--{C}ahn equation on closed manifolds.
\newblock {\em Calc. Var. Partial Differential Equations}, 57(4):1--42, 2018.

\bibitem[GNM19]{GuaracoMarquesNevesMultiplicityOne}
Marco Guaraco, Andre Neves, and Fernando Marques.
\newblock Multiplicity one and strictly stable {A}llen--{C}ahn minimal
  hypersurfaces.
\newblock {P}reprint, 2019.
\newblock \url{http://arxiv.org/abs/192.08997}.

\bibitem[Gua18]{Guaraco15}
Marco A.~M. Guaraco.
\newblock Min-max for phase transitions and the existence of embedded minimal
  hypersurfaces.
\newblock {\em J. Differential Geom.}, 108(1):91--133, 2018.

\bibitem[Hie18]{HiesmayrAllenCahn}
Fritz Hiesmayr.
\newblock Spectrum and index of two-sided {A}llen-{C}ahn minimal hypersurfaces.
\newblock {\em Comm. Partial Differential Equations}, 43(11):1541--1565, 2018.

\bibitem[HL71]{HsiangLawson_Minimal_Submanifolds_of_Low_Cohomog}
Wu-yi Hsiang and H.~Blaine Lawson, Jr.
\newblock Minimal submanifolds of low cohomogeneity.
\newblock {\em J. Differential Geometry}, 5:1--38, 1971.

\bibitem[HL89]{HardtSimon89}
Robert Hardt and Simon Leon.
\newblock Nodal sets for solutions of elliptic equations.
\newblock {\em J. Differ. Geom.}, 30(2):505--522, 1989.

\bibitem[HT00]{HutchinsonTonegawa00}
John Hutchinson and Yoshihiro Tonegawa.
\newblock Convergence of phase interfaces in the van der
  {W}aals-{C}ahn-{H}illiard theory.
\newblock {\em Calc. Var. Partial Differential Equations}, 10(1):49--84, 2000.

\bibitem[Le11]{Le2011}
Nam Le.
\newblock On the second inner variation of the {A}llen--{C}ahn functional and
  its applications.
\newblock {\em Indiana Univ. Math. J.}, 60:1843--1856, 2011.

\bibitem[Le15]{Le2015}
Nam Le.
\newblock On the second inner variation of {A}llen--{C}ahn type energies and
  applications to local minimizers.
\newblock {\em J. Math. Pures Appl.}, 103:1317--1345, 2015.

\bibitem[MN14]{MarquesNevesWillmore}
Fernando~C. Marques and Andr\'{e} Neves.
\newblock Min-max theory and the {W}illmore conjecture.
\newblock {\em Ann. of Math. (2)}, 179(2):683--782, 2014.

\bibitem[MZ42]{MontgomeryZippinLieGroups}
Deane Montgomery and Leo Zippin.
\newblock A theorem on {L}ie groups.
\newblock {\em Bull. Amer. Math. Soc.}, 48:448--452, 1942.

\bibitem[Nur16]{NurserMinMaxWidths}
Charles Nurser.
\newblock {\em Low min-max widths of the round three-sphere}.
\newblock PhD thesis, Imperial College London, 2016.

\bibitem[PW03]{PetersenWilhelmFrankelsTheorem}
Peter Petersen and Frederick Wilhelm.
\newblock On {F}rankel's theorem.
\newblock {\em Canad. Math. Bull.}, 46(1):130--139, 2003.

\bibitem[Sim84]{Simon84}
Leon Simon.
\newblock {\em Lectures on geometric measure theory}.
\newblock Proceedings of the Center for Mathematical Analysis. Australian
  National University, 1984.

\bibitem[SW16]{SimonWickramasekera16}
Leon Simon and Neshan Wickramasekera.
\newblock A frequency function and singular set bounds for branched minimal
  immersions.
\newblock {\em Comm. Pure Appl. Math.}, 69:1213--1258, 2016.

\bibitem[Ton05]{Tonegawa05}
Yoshihiro Tonegawa.
\newblock On stable critical points for a singular perturbation problem.
\newblock {\em Communications in Analysis and Geometry}, 13(2):439--459, 2005.

\bibitem[TW12]{TonegawaWickramasekera10}
Yoshihiro Tonegawa and Neshan Wickramasekera.
\newblock Stable phase interfaces in the van der {W}aals--{C}ahn--{H}illiard
  theory.
\newblock {\em J. Reine Angew. Math.}, 2012(668):191--210, 2012.

\bibitem[Urb90]{UrbanoLowIndex}
Francisco Urbano.
\newblock Minimal surfaces with low index in the three-dimensional sphere.
\newblock {\em Proc. Amer. Math. Soc.}, 108(4):989--992, 1990.

\bibitem[Wic14]{Wickramasekera14}
Neshan Wickramasekera.
\newblock A general regularity theory for stable codimension 1 integral
  varifolds.
\newblock {\em Ann. of Math. (2)}, 179(3):843--1007, 2014.

\end{thebibliography}

\end{document}